\newtheorem{thm}{Theorem}
\newtheorem{lem}[thm]{Lemma}
\begin{document}
%
% paper title
% can use linebreaks \\ within to get better formatting as desired
\title{Fast Parallel Randomized QR with Column Pivoting Algorithms for \\ Reliable Low-rank Matrix Approximations}
% author names and affiliations
% use a multiple column layout for up to two different
% affiliations

\author{\IEEEauthorblockN{Jianwei Xiao}
\IEEEauthorblockA{Department of Mathematics\\
UC Berkeley\\
Berkeley, CA, USA\\
jwxiao@berkeley.edu}
\and
\IEEEauthorblockN{Ming Gu}
\IEEEauthorblockA{Department of Mathematics\\
UC Berkeley\\
Berkeley, CA, USA\\
mgu@math.berkeley.edu}
\and
\IEEEauthorblockN{Julien Langou}
\IEEEauthorblockA{Department of Mathematical and Statistical Sciences\\
University of Colorado Denver\\
Denver, CO, USA\\
julien.langou@ucdenver.edu}}

% conference papers do not typically use \thanks and this command
% is locked out in conference mode. If really needed, such as for
% the acknowledgment of grants, issue a \IEEEoverridecommandlockouts
% after \documentclass

% for over three affiliations, or if they all won't fit within the width
% of the page, use this alternative format:
% 
%\author{\IEEEauthorblockN{Michael Shell\IEEEauthorrefmark{1},
%Homer Simpson\IEEEauthorrefmark{2},
%James Kirk\IEEEauthorrefmark{3}, 
%Montgomery Scott\IEEEauthorrefmark{3} and
%Eldon Tyrell\IEEEauthorrefmark{4}}
%\IEEEauthorblockA{\IEEEauthorrefmark{1}School of Electrical and Computer Engineering\\
%Georgia Institute of Technology,
%Atlanta, Georgia 30332--0250\\ Email: see http://www.michaelshell.org/contact.html}
%\IEEEauthorblockA{\IEEEauthorrefmark{2}Twentieth Century Fox, Springfield, USA\\
%Email: homer@thesimpsons.com}
%\IEEEauthorblockA{\IEEEauthorrefmark{3}Starfleet Academy, San Francisco, California 96678-2391\\
%Telephone: (800) 555--1212, Fax: (888) 555--1212}
%\IEEEauthorblockA{\IEEEauthorrefmark{4}Tyrell Inc., 123 Replicant Street, Los Angeles, California 90210--4321}}

% use for special paper notices
%\IEEEspecialpapernotice{(Invited Paper)}

% make the title area
\maketitle
{\let\thefootnote\relax\footnotetext{This paper has been accepted by 2017 IEEE 24th International Conference on High Performance Computing (HiPC) and awarded the best paper prize.}}

\begin{abstract}
Factorizing large matrices by QR with column pivoting (QRCP) is substantially more expensive than QR without pivoting, owing to communication costs required for pivoting decisions. In contrast, randomized QRCP (RQRCP) algorithms have proven themselves empirically to be highly competitive with high-performance implementations of QR in processing time, on uniprocessor and shared memory machines, and as reliable as QRCP in pivot quality. 

We show that RQRCP algorithms can be as reliable as QRCP with failure probabilities exponentially decaying in oversampling size. We also analyze efficiency differences among different RQRCP algorithms. More importantly, we develop distributed memory implementations of RQRCP that are significantly better than QRCP implementations in ScaLAPACK. 

As a further development, we introduce the concept of and develop algorithms for computing spectrum-revealing QR factorizations for low-rank matrix approximations, and demonstrate their effectiveness against leading low-rank approximation methods in both theoretical and numerical reliability and efficiency.
\end{abstract}

\begin{IEEEkeywords}
QR factorization; low-rank approximation; randomization; spectrum-revealing; distributed computing;
\end{IEEEkeywords}

% For peer review papers, you can put extra information on the cover
% page as needed:
% \ifCLASSOPTIONpeerreview
% \begin{center} \bfseries EDICS Category: 3-BBND \end{center}
% \fi
%
% For peerreview papers, this IEEEtran command inserts a page break and
% creates the second title. It will be ignored for other modes.
\IEEEpeerreviewmaketitle

\section{Introduction}
\subsection{QR factorizations with column pivoting}
A QR factorization of a matrix $A \in \mathbb{R}^{m \times n}$ is $A=QR$ where $Q\in \mathbb{R}^{m \times m}$ is orthogonal and $R \in \mathbb{R}^{m \times n}$ is upper trapezoidal. In LAPACK \cite{anderson1999lapack} and ScaLAPACK \cite{blackford1997scalapack}, the QR factorization can be computed by xGEQRF and PxGEQRF, respectively, where $x$ indicates the matrix data type.

In practical situations where either the matrix $A$ is not always known to be of full rank or we want to find representative columns of $A$, we compute a full or partial QR factorization with column pivoting (QRCP) of the form
\begin{equation}\label{Eqn:QRCP}
A\Pi = QR
\end{equation}
for a matrix $A \in \mathbb{R}^{m \times n}$, where $\Pi\in \mathbb{R}^{n \times n}$ is a permutation matrix and $Q \in \mathbb{R}^{m \times m}$ is an orthogonal matrix. A full QRCP, with $R\in \mathbb{R}^{m \times n}$ being upper trapezoidal, can be computed by either xGEQPF or xGEQP3 in LAPACK \cite{anderson1999lapack}, and either PxGEQPF or PxGEQP3 in ScaLAPACK \cite{blackford1997scalapack}. xGEQP3 and PxGEQP3 are Level 3 BLAS versions of xGEQPF and PxGEQPF respectively. They are considerably faster, while maintaining same numerical behavior. 

Given a target rank $1 \le k \le \min \left(m,n\right)$ in partial QRCP, equation \eqref{Eqn:QRCP} can be written in a $2 \times 2$ block form as
\begin{equation*}
A\Pi
\!=\!
Q\left(
\begin{array}{cc}
R_{11} & R_{12}\\
0 & R_{22}\\
\end{array}
\right)
\!=\!
\left(
\begin{array}{cc}
Q_1 & Q_2
\end{array}
\right)\left(
\begin{array}{cc}
R_{11} & R_{12}\\
0 & R_{22}\\
\end{array}
\right),
\end{equation*} 
\noindent with upper triangular $R_{11} \in \mathbb{R}^{k \times k}$. If $\Pi$ is chosen appropriately, the partial QRCP can separate linearly independent columns from dependent ones, yielding a low-rank approximation $A \approx Q_1\left(
\begin{array}{cc}
R_{11} & R_{12}
\end{array}
\right)\Pi^T$. Efficient and reliable low-rank approximations are useful in many applications including data mining \cite{xiao2016spectrum} and image processing \cite{su2014color}.

\subsection{Randomization in numerical linear algebra}
Traditional matrix algorithms are prohibitively expensive for many applications where the datasets are very large. Randomization allows us to design provably accurate algorithms for matrix problems that are massive or computationally expensive. Randomized matrix algorithms have been successfully developed in fast least squares \cite{clarkson2013low}, sketching algorithms \cite{demmel2015communication}, low-rank approximation problems \cite{gu2015subspace}, etc.

The computational bottleneck of QRCP is in searching pivots, which requires updating all the column norms in the trailing matrix. While the number of floating point operations (flops) is relatively small, column norm updating incurs excessive communication costs and is at least as expensive as QR. \cite{duersch2015true} develops a randomized QRCP (RQRCP), where random sampling is used for column selection, resulting in dramatically reduced communication costs. They also introduce updating formulas to reduce the cost of sampling. With their column selection mechanism they obtain approximations that are comparable to those from the QRCP in quality, but with performance near QR. They demonstrate strong parallel scalability on shared memory multiple core systems using an implementation in Fortran with OpenMP. 

\subsection{Contributions}
\begin{itemize}
\item \textbf{Reliability analysis:} We show, with a rigorous probability analysis, that RQRCP algorithms can be as reliable as QRCP up to failure probabilities that exponentially decay with oversampling size. 

\item \textbf{Distributed memory implementation:} We extend RQRCP shared memory implementation of \cite{duersch2015true} to distributed memory machines. Based on ScaLAPACK, our implementation is significantly faster than QRCP routines in ScaLAPACK, yet as effective in quality. 

\item \textbf{Spectrum-revealing QR factorization:} We propose a novel variant of the QR factorization for low-rank approximation: spectrum-revealing QR factorization (SRQR). We prove singular value bounds and residual error bounds for SRQR, and develop RQRCP-based efficient algorithms for its computation. We also propose SRQR based CUR and CX matrix decomposition algorithms. SRQR based algorithms are as effective as other state-of-the-art CUR and CX matrix decomposition algorithms, while significantly faster. 
\end{itemize}

\section{Introduction to QRCP}
\begin{algorithm}
\caption{QR with column pivoting (QRCP)}
\label{algorithm_Partial Householder QR with Column Pivoting (QRCP)}
\begin{algorithmic}
\STATE $\textbf{Inputs:}$
\STATE $A$ is $m \times n$ matrix, $k$ is target rank, $1 \le k \le \min\left(m,n\right)$
\STATE $\textbf{Outputs:}$
\STATE $Q$ is $m \times m$ orthogonal matrix
\STATE $R$ is $m \times n$ upper trapezoidal matrix
\STATE $\Pi$ is $n \times n$ permutation matrix such that $A\Pi = QR$
\STATE $\textbf{Algorithm:}$
\STATE Initialize $\Pi^{\left(0\right)}=I_{n}$, $r_s = ||A\left(1:m,s\right)||_2 \; \left(1 \le s \le n\right)$
\FOR{$i=1:k$}
\STATE Find $j = argmax_{i \le s \le n} r_s$
\STATE Swap $r_i$ and $r_j$, $A\left(1:m,i\right)$ and $A\left(1:m,j\right)$
\STATE Update permutation with last swap $\Pi^{\left(i\right)}=\Pi^{\left(i-1\right)}\Pi_{i,j}$
\STATE Form Householder reflection $H_i$ from $A\left(i:m,i\right)$
\STATE Update $A\left(i:m,i:n\right) \leftarrow H_i A\left(i:m,i:n\right)$
\STATE Update $r_s = ||A\left(i+1:m,s\right)||_2 \quad \left(i+1 \le s \le n\right)$
\ENDFOR
\STATE $Q$ = $H_1H_2 \cdots H_k$ is the product of all reflections
\STATE $R$ = upper trapezoidal part of $A$, $\Pi$ = $\Pi^{\left(k\right)}$
\end{algorithmic}
\end{algorithm}
QRCP is introduced in Algorithm \ref{algorithm_Partial Householder QR with Column Pivoting (QRCP)}. In each loop, QRCP swaps the leading column with the column in the trailing matrix with the largest column norm. It is a very effective practical tool for low-rank matrix approximations, but may require additional column interchanges to reveal the rank for contrived pathological matrices \cite{gu1996efficient,kahan1966numerical}. 

To find a correct pivot in the $\left(i+1\right)^{\mbox{th}}$ loop, trailing column norms must be updated to remove contributions from row $i$. This computation, while relatively minor flop-wise, requires accessing the entire trailing matrix, and is primary cause of significant slow-down of QRCP over QR. 

LAPACK subroutines xGEQPF and xGEQP3 are based on Algorithm \ref{algorithm_Partial Householder QR with Column Pivoting (QRCP)}. The difference is that xGEQP3 re-organizes the computations to apply Householder reflections in blocks to the trailing matrix, partly with Level 3 BLAS instead of Level 2 BLAS, for faster execution. ScaLAPACK subroutines PxGEQPF and PxGEQP3 are the parallel versions of xGEQPF and xGEQP3 in distributed memory systems, respectively, with PxGEQP3 being the more efficient. 

\section{Randomized QRCP}
\subsection{Introduction to RQRCP}
\begin{algorithm}
\caption{Randomized QRCP (RQRCP)}\label{Algorithm_Partial QRCP with random sampling (RQRCP)}
\begin{algorithmic}
\STATE $\textbf{Inputs:}$
\STATE $A$ is $m \times n$ matrix, $k$ is target rank, $1 \le k \le \min\left(m,n\right)$
\STATE $\textbf{Outputs:}$
\STATE $Q$ is $m \times m$ orthogonal matrix
\STATE $R$ is $m \times n$ upper trapezoidal matrix
\STATE $\Pi$ is $n \times n$ permutation matrix such that $A\Pi = QR$
\STATE $\textbf{Algorithm:}$
\STATE Determine block size $b$ and oversampling size $p \ge 0$
\STATE Generate i.i.d Gaussian matrix $\Omega \in \mathcal{N}(0,1)^{(b+p) \times m}$
\STATE Compute $B=\Omega A$, initialize $\Pi=I_{n}$
\FOR{$i=1:b:k$}
\STATE $b=\min\left(b,k-i+1\right)$
\STATE Partial QRCP on $\!B(:,i:n)$ to get $b$ pivots $\!\left(j_1,\dots,j_{b}\right)$
\STATE Swap $\left(j_1,\dots,j_{b}\right)$ and $\left(i:i+b-1\right)$ columns in $A$, $\Pi$
\STATE Do unpivoted QR on $A\left(i:m,i:i+b-1\right) = \widetilde{Q} \widetilde{R}$
\STATE $A(i:m,i+b:n) \leftarrow \widetilde{Q}^T A(i:m,i+b:n)$
\STATE $B(1:b,i+b:n)=B(1:b,i+b:n)-B(1:b,i:j+b-1)(A(i:i+b-1.i:i+b-1))^{-1}A(i:i+b-1,i+b:n)$
\ENDFOR
\STATE Q is the product of $\widetilde{Q}$, R = upper trapezoidal part of $A$
\end{algorithmic}
\end{algorithm}
RQRCP applies a random matrix $\Omega$ of independent and identically distributed (i.i.d.) random variables to $A$ to compress $A$ into $B  = \Omega A$ with much smaller row dimension, where the block size $b$ and oversampling size $p$ are chosen with $b+p \ll m$. QRCP and RQRCP make pivot decisions on $A$ and $B$, respectively. Since $B$ has much smaller row dimension than $A$, RQRCP can choose pivots much more quickly than QRCP. RQRCP repeatedly runs partial QRCP on $B$ to pick $b$ pivots, computes the QR factorization on the pivoted columns, forms a block of Householder reflections, applies them to the trailing matrix of $A$ with Level 3 BLAS, and then updates the remaining columns of $B$. RQRCP exits this process when it reaches a target rank $k$.

The RQRCP algorithm as described in Algorithm \ref{Algorithm_Partial QRCP with random sampling (RQRCP)} computes a low-rank approximation with a target rank $k$. While it can be modified to compute a low-rank approximation that satisfies an error tolerance, our analysis will be on Algorithm \ref{Algorithm_Partial QRCP with random sampling (RQRCP)}. The block size $b$ is a machine-dependent parameter experimentally chosen to optimize performance; whereas oversampling size $p$ is chosen to ensure the reliability of Algorithm \ref{Algorithm_Partial QRCP with random sampling (RQRCP)}. In practice, a value of $p$ between $5$ and $20$ suffices. At the end of each loop, matrix $B$ is updated, via one of several updating formulas, to become a random projection matrix for the trailing matrix. In Section \ref{subsection: Probability analysis of RQRCP} we will justify Algorithm \ref{Algorithm_Partial QRCP with random sampling (RQRCP)} with a rigorous probability analysis. 

\subsection{Updating formulas for B in RQRCP}
We discuss updating formulas for $B$ and their efficiency differences. Consider partial QRs on $A \Pi$ and $B \Pi = \Omega A \Pi$,
\begin{equation}\label{Eqn:QRAB}
A \Pi 
=
Q
\left(
\begin{array}{cc}
R_{11} & R_{12} \\
& R_{22}
\end{array}
\right), \quad
B \Pi = 
\widehat{Q}
\left(
\begin{array}{cc}
\widehat{R}_{11} & \widehat{R}_{12} \\
& \widehat{R}_{22}
\end{array}
\right),
\end{equation}
where $A \in \mathbb{R}^{m \times n}$. We describe two different updating formulas introduced in \cite{duersch2015true} and \cite{martinsson2017householder}, respectively.

For the first updating formula \cite{duersch2015true}, partition $\widehat{\Omega} \stackrel{def}{=} \widehat{Q}^T \Omega Q \stackrel{def}{=} \left(
\begin{array}{cc}
\widehat{\Omega}_1 & \widehat{\Omega}_2
\end{array}
\right)$,
equation \eqref{Eqn:QRAB} implies
\begin{equation}\label{Eqn:hatomega}
\widehat{\Omega} 
\left(
\begin{array}{cc}
R_{11} & R_{12} \\
& R_{22}
\end{array}
\right)
=
\left(
\begin{array}{cc}
\widehat{R}_{11} & \widehat{R}_{12} \\
& \widehat{R}_{22}
\end{array}
\right), 
\end{equation}
\noindent leading to the updating formula of Algorithm \ref{Algorithm_Partial QRCP with random sampling (RQRCP)}, 
\begin{equation}\label{Eqn:updating_1}
\left(
\begin{array}{c}
\widehat{R}_{12} \\
\widehat{R}_{22}
\end{array}
\right)
\leftarrow \widehat{\Omega}_2 R_{22}
=
\left(
\begin{array}{c}
\widehat{R}_{12} - \widehat{R}_{11} R_{11}^{-1} R_{12} \\
\widehat{R}_{22}
\end{array}
\right),
\end{equation}
\noindent where only the upper part of updating formula \eqref{Eqn:updating_1} requires computation, at a total flop cost of $O\left(n\,k\,b\right)$.

For the second updating formula \cite{martinsson2017householder}, partition  
\[ \overline{\Omega} \stackrel{def}{=} \Omega\, Q \stackrel{def}{=}  
\left(
\begin{array}{cc}
\overline{\Omega}_1 & \overline{\Omega}_2
\end{array}
\right), \; \mbox{and} \; B\, \Pi \stackrel{def}{=} \left(
\begin{array}{cc}
B_1 & B_2
\end{array}
\right), \]
leading to an updating formula,
\begin{equation}\label{Eqn:updating_2}
\left(
\begin{array}{c}
\widehat{R}_{12} \\
\widehat{R}_{22}
\end{array}
\right)
\leftarrow \overline{\Omega}_2 R_{22} 
=
B_2 - \overline{\Omega}_1 R_{12}.
\end{equation}
\noindent The approach in \cite{martinsson2017householder} decomposes $Q$ and is mathematically equivalent to but computationally less efficient than \eqref{Eqn:updating_2}, which requires totaling $O\left(\left(b+p\right)\left(m+n\right)k\right)$ flops.

Both updating formulas are numerically stable in practice. Since $\widehat{\Omega} = \widehat{Q}^T \, \overline{\Omega}$, the updating formula \eqref{Eqn:updating_2} differs from formula \eqref{Eqn:updating_1} only by a pre-multiplied orthogonal matrix $\widehat{Q}.$  Consequently, both updating formulas produce identical permutations and thus identical low-rank approximations. However, updating formula \eqref{Eqn:updating_1} requires $50\%$ fewer flops than \eqref{Eqn:updating_2} and is much faster in numerical experiments. 

\subsection{Probability analysis of RQRCP}\label{subsection: Probability analysis of RQRCP}
In this section, we show that RQRCP is as reliable as QRCP up to negligible failure probabilities. Since QRCP chooses the column with the largest norm as the pivot column in each loop, QRCP satisfies the following property. 
\begin{lem}[QRCP pseudo-diagonal dominance]
\label{Thm:pseudo-diagonal dominance of QRCP}
Let $A \in \mathbb{R}^{m \times n}$. Perform a $k$-step partial QRCP on $A$,
\begin{equation*}
A\Pi=QR=Q\left(
\begin{array}{cc}
R_{11} & R_{12} \\
& R_{22} 
\end{array}
\right), \quad \mbox{with} \quad R = \left(r_{ij}\right),
\end{equation*}
\begin{equation*}
\mbox{then} \, \left|r_{ii}\right| \ge \sqrt{\sum_{l=i}^{m} \left|r_{lj}\right|^2} \, (1 \le i \le k, i+1 \le j \le n). 
\end{equation*}
\end{lem}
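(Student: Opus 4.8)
The plan is to unwind the QRCP pivoting rule column by column and exploit the standard fact that in a Householder QR step the column norms of the trailing matrix are preserved under the orthogonal transformation. Fix indices $i$ and $j$ with $1 \le i \le k$ and $i+1 \le j \le n$. Consider the state of the working matrix just before the $i$-th loop of Algorithm~\ref{algorithm_Partial Householder QR with Column Pivoting (QRCP)}. At that moment the first $i-1$ Householder reflections and the associated column swaps have been applied, and the entries $r_{ll}$ for $l < i$ are already fixed; the relevant part of the picture is the trailing submatrix $A(i:m,\,i:n)$. The pivoting rule selects, as the $i$-th pivot, the column of this trailing submatrix with the largest Euclidean norm, so in particular the norm of the eventual $i$-th column is at least the norm of the (eventual) $j$-th column restricted to rows $i:m$: that is, $\|A(i:m,i)\| \ge \|A(i:m,j)\|$ after the swap.

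Next I would identify the two sides of this inequality with quantities in the final factor $R$. On the one hand, forming the Householder reflection $H_i$ from $A(i:m,i)$ and applying it zeros out the subdiagonal of that column, so $|r_{ii}| = \|A(i:m,i)\|$ exactly (up to the usual sign convention). On the other hand, $H_i$ is orthogonal and acts only on rows $i:m$, hence it preserves the norm of every trailing column over those rows: $\|H_i A(i:m,j)\| = \|A(i:m,j)\|$ for each $j > i$. After this step, rows $i:m$ of column $j$ consist of the entry that becomes $r_{ij}$ together with the part of the column lying in rows $i+1:m$; subsequent reflections $H_{i+1},\dots,H_k$ again act orthogonally on rows $\ge i+1$ and therefore preserve the norm of what remains below row $i$ in column $j$, so that $\sum_{l=i}^{m} |r_{lj}|^2 = \|A(i:m,j)\|^2$ with the right-hand side evaluated at the post-swap, pre-$H_i$ stage. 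Chaining these identities with the pivoting inequality gives $|r_{ii}| = \|A(i:m,i)\| \ge \|A(i:m,j)\| = \sqrt{\sum_{l=i}^{m}|r_{lj}|^2}$, which is exactly the claim.

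The one point that needs genuine care, rather than routine bookkeeping, is the norm-preservation argument for column $j$: one must track that after the swap the column that will become column $j$ of $R$ has, over rows $i:m$, exactly the norm the pivoting rule compared against, and that none of the later transformations $H_{i+1},\dots,H_k$ (which mix rows $i+1:m$ among themselves) change the partial norm $\sum_{l=i}^m |r_{lj}|^2$ — the $r_{ij}$ entry is untouched by them since they act trivially on row $i$, and the tail over rows $i+1:m$ is rotated orthogonally. Making this precise amounts to an induction on the loop index with the invariant ``the norm of trailing column $j$ over the current rows equals $\sqrt{\sum_{l=\text{current}}^{m}|r_{lj}|^2}$ once those entries are finalized.'' Everything else is immediate from the definitions of Householder reflections and the argmax pivot choice, so I expect the writeup to be short.
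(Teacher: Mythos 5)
Your argument is correct and is exactly the elementary reasoning the paper intends: the paper states Lemma~\ref{Thm:pseudo-diagonal dominance of QRCP} with only the one-line remark that it follows from the pivoting rule and gives no separate proof, and the three ingredients you assemble --- the argmax pivot choice, $|r_{ii}|=\|A(i{:}m,i)\|_2$ from the Householder step, and invariance of the partial column norm $\sqrt{\sum_{l=i}^m|r_{lj}|^2}$ under the later reflections $H_{i+1},\dots,H_k$ (which act only on rows $\ge i+1$) --- form a complete proof. One small point worth making explicit in a polished writeup: because of swaps at later steps, the column that ultimately lands in position $j$ of $R$ may occupy some other position $j''\ge i+1$ at the moment the step-$i$ pivot is chosen, so the comparison you invoke should be read as $\|A^{(i)}(i{:}m,i)\|_2\ge\|A^{(i)}(i{:}m,j'')\|_2$, which the argmax rule gives uniformly over all trailing positions; since later column swaps move columns intact and later reflections fix row $i$, this quantity is still identified with $\sqrt{\sum_{l=i}^m|r_{lj}|^2}$ for the final column $j$, and the bound follows.
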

A similar property holds for RQRCP with target rank $k$.
\begin{thm}[RQRCP pseudo-diagonal dominance]\label{Thm:reliability of RQRCP}
$A \in \mathbb{R}^{m \times n}, \varepsilon, \Delta \in (0,1)$. Draw $\Omega \in \mathcal{N}(0,1)^{\left(b+p\right)\times m}$ with $p \ge \lceil \frac{4}{\varepsilon^2-\varepsilon^3} \, {\bf log} \frac{2nk}{\Delta}\rceil -1$. Perform RQRCP on $A$ given $k$, 
\begin{equation*}
A\Pi=QR=Q\left(
\begin{array}{cc}
R_{11} & R_{12} \\
& R_{22} \\
\end{array}
\right), \quad \mbox{with} \quad R = \left(r_{ij}\right),
\end{equation*}
\begin{equation*}
\mbox{then} \, \left|r_{ii}\right| \ge \sqrt{\frac{1-\varepsilon}{1+\varepsilon}} \sqrt{\sum_{l=i}^{m} \left|r_{lj}\right|^2} \,
(1 \le i \le k, i+1 \le j \le n) 
\end{equation*}
with probability at least $1 - \Delta$.
\end{thm}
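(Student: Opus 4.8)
The plan is to reduce the claimed inequality, separately for each fixed index pair $(i,j)$ with $1\le i\le k$ and $i+1\le j\le n$, to a single Johnson--Lindenstrauss-type norm-preservation estimate for a Gaussian projection, and then to control all $O(nk)$ such pairs simultaneously by a union bound. First I would set up the geometry. Fix the output permutation $\Pi$ produced by RQRCP. For $1\le i\le k$, the quantity $|r_{ii}|$ equals the Euclidean norm of the length-$(m-i+1)$ trailing subvector of the column that RQRCP selects as the $i$-th pivot, measured in the matrix obtained from $A\Pi$ after its first $i-1$ Householder reflections (and the step-$i$ pivot swap); call this vector $c_i$. Likewise, for $j>i$ the quantity $\sqrt{\sum_{l=i}^m |r_{lj}|^2}$ equals the norm $\|c_j\|_2$ of the corresponding trailing subvector of the column with final label $j$ at that same stage, because all subsequent Householder reflections (and the unpivoted block QR) act only on rows $\ge i$ and are therefore norm-preserving on that subvector, while the later column interchanges merely relabel columns. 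Hence it suffices to show that, on an event of probability at least $1-\Delta$, one has $\|c_i\|_2\ge\sqrt{\tfrac{1-\varepsilon}{1+\varepsilon}}\,\|c_j\|_2$ for every such pair $(i,j)$.

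The second ingredient is a description of how RQRCP picks $c_i$. By construction the $i$-th pivot is the one chosen by a QRCP sweep on the current sample matrix $B$; combining the updating identity \eqref{Eqn:hatomega} (so that the trailing block of the sketch's $R$-factor equals $\widehat\Omega_2 R_{22}$, with the columns of $R_{22}$ being exactly the $c_j$'s at that stage) with the greedy max-norm rule behind Lemma \ref{Thm:pseudo-diagonal dominance of QRCP}, the $i$-th pivot column is the one maximizing $\|G^{(i)}c_j\|_2$ over the remaining columns $j$, where $G^{(i)}$ is the effective random projection acting at step $i$. The two structural facts about $G^{(i)}$ I would establish are: (a) $G^{(i)}$ has at least $p+1$ rows — within a block of size $b$ the $s$-th column is decided using a sketch of $b+p-(s-1)\ge p+1$ rows, since each earlier pivot in that block consumes exactly one direction; and (b) conditionally on everything revealed before step $i$ (earlier pivots, reflections, and the pivots already chosen in the current block), $G^{(i)}$ is an i.i.d.\ Gaussian matrix that is independent of the vectors $c_j$ entering step $i$. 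Fact (b) rests on the rotational invariance of the Gaussian law: $\Omega Q$ has the same distribution as $\Omega$, and conditioning a Gaussian on a linear functional leaves its orthogonal complement Gaussian and independent, so peeling off the randomness used by each successive pivot decision leaves a fresh lower-dimensional Gaussian for the next comparison.

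The third ingredient is the standard Gaussian norm-concentration bound: for fixed $x$ and $G\in\mathcal N(0,1)^{q\times d}$, $\Pr\!\big[\,\|Gx\|_2^2\notin\big[(1-\varepsilon)q\|x\|_2^2,\,(1+\varepsilon)q\|x\|_2^2\big]\,\big]\le 2\,e^{-q(\varepsilon^2-\varepsilon^3)/4}$. Conditionally on the pre-step-$i$ history, I would apply this with $q=p+1$ to each of the at most $n$ vectors $c_j$ competing at step $i$, invoking Fact (b) to apply the bound and Fact (a) for $q\ge p+1$. On the intersection of all these good events — at most $nk$ in number, each failing with conditional probability at most $2\,e^{-(p+1)(\varepsilon^2-\varepsilon^3)/4}$, so the total failure probability is at most $2nk\,e^{-(p+1)(\varepsilon^2-\varepsilon^3)/4}\le\Delta$ by the stated choice of $p$ — we chain inequalities: since the selected pivot maximizes $\|G^{(i)}c_\cdot\|_2$, we get $(1+\varepsilon)q\,\|c_i\|_2^2\ge\|G^{(i)}c_i\|_2^2\ge\|G^{(i)}c_j\|_2^2\ge(1-\varepsilon)q\,\|c_j\|_2^2$, whence $\|c_i\|_2\ge\sqrt{\tfrac{1-\varepsilon}{1+\varepsilon}}\,\|c_j\|_2$, and the theorem follows.

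I expect the main obstacle to be making the conditioning in Fact (b) rigorous: the pivot columns and the Householder reflections are functions of $\Omega$, so $G^{(i)}$ and the $c_j$'s are not independent in any naive sense, and one must argue — via matrix-normal rotational invariance together with a careful step-by-step revelation of the entries of $\Omega$ — that at the instant of each pivot decision the operative projection is still a fresh Gaussian of dimension at least $p+1$, independent of the geometry it acts on. A secondary point requiring care is the bookkeeping across block boundaries: one must verify that the updating formula \eqref{Eqn:updating_1} preserves this ``conditionally Gaussian with at least $p+1$ effective rows'' invariant from one block to the next, so that the uniform dimension bound $q\ge p+1$ holds at every column step.
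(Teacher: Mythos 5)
Your proposal follows essentially the same route as the paper: reduce the pivot comparison at each step to a Johnson--Lindenstrauss norm-preservation event for an i.i.d.\ Gaussian sketch with at least $p+1$ effective rows, argue that the sketch remains conditionally Gaussian and independent of the columns it acts on, and union-bound over the at most $nk$ comparisons to get the failure bound $2nk\,e^{-(p+1)(\varepsilon^2-\varepsilon^3)/4}\le\Delta$. The one noteworthy difference is in the conditioning: the paper deliberately conditions on the permutation $\Pi$ alone (establishing per-step Lemma~\ref{lemma: reliability of RQRCP without updating B} conditional on $\Pi$, showing $\overline{\Omega}_2$ and $\widehat{\Omega}_2$ remain i.i.d.\ Gaussian given $\Pi$ since $\widehat{Q}$ depends only on $\overline{\Omega}_1$, and then removing the conditioning by the law of total probability), whereas you condition on the full step-by-step revelation of the algorithm's history; both are workable, but the paper's coarser conditioning sidesteps some of the filtration bookkeeping you flag as the main obstacle. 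Your Facts (a) and (b) correspond exactly to the paper's observation that every trailing sketch has $b+p-s\ge p+1$ rows (from the block upper-triangular form of $\widehat{\Omega}$) and that the updating formulas preserve the conditionally-Gaussian structure across block boundaries.
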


We prove Theorem \ref{Thm:reliability of RQRCP} in two stages. Firstly, we prove that the updating formulas for $B$ are indeed products of i.i.d. Gaussian matrices and the trailing matrix of $A$, conditional on $\Pi$. Secondly, we use Johnson-Lindenstrauss Theorem \cite{dasgupta2003elementary}  and the law of total probability to establish Theorem \ref{Thm:reliability of RQRCP}. 

\subsubsection{Correctness of Updating formulas for B}
For correctness, we show that the matrices $\widehat{\Omega}_2$ and $\overline{\Omega}_2$ in updating formulas \eqref{Eqn:updating_1} and \eqref{Eqn:updating_2} remain i.i.d. Gaussian given permutation matrix $\Pi$. Consider $s$-step partial QRs of $A \Pi$ and $B \Pi$ in equation \eqref{Eqn:QRAB} with $R_{11}, \widehat{R}_{11} \in \mathbb{R}^{s \times s}$. Recall that
\[\overline{\Omega} = \Omega \, Q = \left(
\begin{array}{cc}
\overline{\Omega}_1 & \overline{\Omega}_2
\end{array}\right), \quad \widehat{\Omega} = \widehat{Q}^T \overline{\Omega} = 
\left(
\begin{array}{cc}
\widehat{\Omega}_1 & \widehat{\Omega}_2
\end{array}
\right).\]

Given $\Pi$, $Q$ is an orthogonal matrix decorrelated with $\Omega$ so $\overline{\Omega} = \Omega Q$ remains i.i.d. Gaussian. From equation \eqref{Eqn:QRAB},
\begin{equation*}
\widehat{Q}\left(
\begin{array}{cc}
\widehat{R}_{11} & \widehat{R}_{12} \\
& \widehat{R}_{22} \\
\end{array}
\right)=\overline{\Omega}\left(
\begin{array}{cc}
R_{11} & R_{12} \\
& R_{22} \\
\end{array}
\right).
\end{equation*}
{\noindent}The orthogonal matrix $\widehat{Q}$ is completely determined by its first $s$ columns, which in turn are completely determined by $\overline{\Omega}_1$, which is decorrelated with $\overline{\Omega}_2$. 

It now follows that $\widehat{\Omega}_2 = \widehat{Q}^T\,\overline{\Omega}_2 $ must remain i.i.d. Gaussian given $\Pi$. The matrices $\widehat{\Omega}_2$ and $\overline{\Omega}_2$ in updating formulas \eqref{Eqn:updating_1} and \eqref{Eqn:updating_2} correspond to the special case $s = b$, and thus must remain i.i.d. given permutation matrix $\Pi$.

Applying above argument on each loop in RQRCP, we conclude that, with both updating formulas, the remaining columns of $B$ in each loop are always a product of an i.i.d. Gaussian matrix and the trailing matrix of $A$, conditional on $\Pi$. 

Additionally, equation \eqref{Eqn:hatomega} implies 
that $\widehat{\Omega}$ must be a $2\times 2$ block upper triangular  
matrix since $R_{11}$ is invertible for any $0 \le s \le b-1$: 
\[\widehat{\Omega} = \left(
\begin{array}{cc}
\widehat{\Omega}_1 & \widehat{\Omega}_2
\end{array}\right) = \left(
\begin{array}{cc}
\widehat{\Omega}_{11} & \widehat{\Omega}_{12} \\
 & \widehat{\Omega}_{22} 
\end{array}\right), \] 
which allows us to write from equation \eqref{Eqn:hatomega}
\[ \widehat{R}_{22} = \widehat{\Omega}_{22} \, {R}_{22}. \]
Thus every trailing matrix in $B$ is a matrix product of an i.i.d. Gaussian matrix with $b+p-s \ge p + 1$ rows and the trailing matrix of $A$, given $\Pi$. Together with above argument on updating formulas, we conclude that {\em every} pivot computed by Algorithm \ref{Algorithm_Partial QRCP with random sampling (RQRCP)} is based on choosing the column with the largest column norm of the matrix product of an i.i.d. Gaussian matrix and a trailing matrix of $A$, conditional on the corresponding column permutation matrix $\Pi$. 

\subsubsection{Probability analysis of reliability of RQRCP}
Our probability analysis is based on the Johnson-Lindenstrauss Theorem \cite{dasgupta2003elementary}. We say a vector $x\in \mathbb{R}^{d}$ satisfies the Johnson$-$Lindenstrauss condition for given $\varepsilon > 0$ under i.i.d Gaussian matrix $\Omega \in \mathcal{N}(0,1)^{r \times d}$ if 
\begin{equation*}
\left(1-\varepsilon\right)\left\|x\right\|_2^2 \le \left|\left|\frac{1}{\sqrt{r}} \Omega x \right|\right|_2^2 \le \left(1+\varepsilon\right)\left\|x\right\|_2^2.
\end{equation*}
{\noindent}The Johnson$-$Lindenstrauss Theorem states that a vector $x$ satisfies the Johnson$-$Lindenstrauss condition with high probability 
%\small%\hspace{-0.05cm}\ge\hspace{-0.05cm}
\begin{eqnarray*}
&& P\left((1-\varepsilon)\left\|x\right\|_2^2\le\left|\left|\frac{1}{\sqrt{r}}\Omega x\right|\right|_2^2\le(1+\varepsilon)\left\|x\right\|_2^2\right)\\
&& \geq  1-2\exp\left(-\frac{\left(\varepsilon^2-\varepsilon^3\right)r}{4}\right).
\end{eqnarray*}
%\begin{equation*}

%\end{equation*}
\normalsize
\noindent Before we prove Theorem \ref{Thm:reliability of RQRCP}, we need Lemma \ref{lemma: reliability of RQRCP without updating B}.
\begin{lem}\label{lemma: reliability of RQRCP without updating B} Suppose that RQRCP computes an $s$-step partial QRCP factorization $A\Pi = Q R$ with $R = \left(r_{i,j}\right)$, then for any $\varepsilon \in \left(0,1\right)$,
\begin{equation}\label{Eqn:rss}
\left|r_{s,s}\right| \ge \sqrt{\frac{1-\varepsilon}{1+\varepsilon}} \sqrt{\sum_{l=s}^{m} \left|r_{lj}\right|^2}, \quad\left(s+1 \le j \le n\right)
\end{equation}
{\noindent}with probability at least $1 - \Delta_s$ where
\begin{equation*}
\Delta_s=2(n-s+1)\exp\left(\frac{-(\varepsilon^2-\varepsilon^3)(p+1)}{4}\right).
\end{equation*}
\end{lem}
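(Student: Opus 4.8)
\emph{Proof proposal.} The plan is to reduce \eqref{Eqn:rss} to an ordinary pivot-dominance statement about the columns of the current trailing matrix, and then apply the Johnson--Lindenstrauss bound one column at a time. Let $\Pi$ here denote the permutation accumulated in the first $s-1$ steps of RQRCP, and let $C \in \mathbb{R}^{(m-s+1)\times(n-s+1)}$ be the trailing submatrix $A(s{:}m,\,s{:}n)$ after those steps and after the $s$-th pivot swap; write $c_1$ for its first column (the $s$-th pivot column) and $c_2,\dots,c_{n-s+1}$ for the remaining columns. Because the $s$-th Householder reflection acts orthogonally on rows $s{:}m$ and zeros out $c_1$ below its first entry, one has $|r_{s,s}| = \|c_1\|_2$ and $\sqrt{\sum_{l=s}^m |r_{l,j}|^2} = \|c_{\,j-s+1}\|_2$ for $j=s+1,\dots,n$; hence \eqref{Eqn:rss} is equivalent to
\begin{equation*}
\|c_1\|_2 \;\ge\; \sqrt{\frac{1-\varepsilon}{1+\varepsilon}}\;\|c_t\|_2, \qquad t = 2,\dots,n-s+1.
\end{equation*}
By the correctness analysis of the $B$-updates in the previous subsection, $c_1$ is the column of $GC$ of largest Euclidean norm, where $G$ is an i.i.d.\ Gaussian matrix with $r \ge p+1$ rows (the sketch has $b+p$ rows and at most $b-1$ of them have been eliminated by within-block QR updates before step $s$) that, conditional on $\Pi$, is independent of the now deterministic matrix $C$; in particular $\|Gc_1\|_2 \ge \|Gc_t\|_2$ for all $t$. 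This is the exact analogue of the QRCP property in Lemma~\ref{Thm:pseudo-diagonal dominance of QRCP}, but with the comparison made through the sketch $G$ rather than directly on $C$.

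Conditioning on $\Pi$, I would apply the Johnson--Lindenstrauss Theorem to each of the at most $n-s+1$ columns $c_1,\dots,c_{n-s+1}$ under $G$ and take a union bound: with conditional probability at least
\begin{equation*}
1 - 2(n-s+1)\exp\!\left(-\frac{(\varepsilon^2-\varepsilon^3)\,r}{4}\right) \;\ge\; 1 - \Delta_s,
\end{equation*}
every column $c$ obeys $(1-\varepsilon)\|c\|_2^2 \le \frac1r\|Gc\|_2^2 \le (1+\varepsilon)\|c\|_2^2$, where the last displayed inequality uses $r \ge p+1$. On that event, for each $t$,
\begin{equation*}
(1+\varepsilon)\,r\,\|c_1\|_2^2 \;\ge\; \|Gc_1\|_2^2 \;\ge\; \|Gc_t\|_2^2 \;\ge\; (1-\varepsilon)\,r\,\|c_t\|_2^2,
\end{equation*}
which rearranges to $\|c_1\|_2 \ge \sqrt{(1-\varepsilon)/(1+\varepsilon)}\,\|c_t\|_2$, i.e.\ to \eqref{Eqn:rss}. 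Since this conditional bound holds uniformly over all realizations of $\Pi$, the law of total probability removes the conditioning and yields \eqref{Eqn:rss} with unconditional probability at least $1-\Delta_s$.

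The Householder norm bookkeeping of the first step and the JL union-bound counting of the second are routine. The one delicate point --- and the place where care is essential --- is the assertion that, conditional on $\Pi$, the Gaussian block $G$ driving the $s$-th pivot decision is still i.i.d.\ Gaussian and independent of $C$: the permutation $\Pi$ already encodes the outcome of that pivot comparison, so the argument relies on the fact (established in the previous subsection via $\widehat{R}_{22} = \widehat{\Omega}_{22} R_{22}$, with $\widehat{\Omega}_{22}$ i.i.d.\ Gaussian given $\Pi$, and on the decorrelation of $\overline{\Omega}_2$ from $\overline{\Omega}_1$) that conditioning on $\Pi$ only fixes the earlier Gaussian randomness and leaves the fresh block acting at step $s$ intact. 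I would simply invoke that result rather than re-deriving it.
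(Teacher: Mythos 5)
Your proof follows essentially the same route as the paper's: condition on the permutation from the first $s-1$ steps, apply the Johnson--Lindenstrauss bound to each of the $n-s+1$ trailing columns with a union bound to obtain the conditional probability $1-\Delta_s$, exploit that the pivot maximizes the sketched column norm, and remove the conditioning by the law of total probability. The only cosmetic difference is that you spell out the reduction $|r_{s,s}|=\|c_1\|_2$ and $\sqrt{\sum_l |r_{lj}|^2}=\|c_{j-s+1}\|_2$, which the paper leaves implicit in the phrase ``after one-step QR,'' and you explicitly flag the conditional-independence issue that the paper likewise delegates to the preceding correctness subsection.
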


\begin{proof}[Proof of Lemma \ref{lemma: reliability of RQRCP without updating B}] Let $\Pi$ be a random permutation in equation \eqref{Eqn:QRAB} with $R_{11},\,\widehat{R}_{11} \in \mathbb{R}^{(s-1) \times (s-1)}$, let $E_{sj}$ be the event where  $R_{22}\left(:,j\right)$ satisfies the Johnson$-$Lindenstrauss condition. Therefore $E_s = \bigcap_{j=1}^{n-s+1}E_{sj}$ is the event where all columns of $R_{22}$ satisfy the Johnson$-$Lindenstrauss condition. By correctness of RQRCP, the event $E_{sj}$ for any given $\Pi$ satisfies the Johnson$-$Lindenstrauss condition, therefore
\begin{eqnarray*}
&& P\left(E_s|\Pi\right) \ge \sum_{j=1}^{n-s+1} P\left(E_{sj}|\Pi\right) - \left(n-s\right) \\
&\ge& \!\!\!\!\!\!\!\!\sum_{j=1}^{n-s+1} \left(1 - 2\exp\left(\frac{-\left(\varepsilon^2-\varepsilon^3\right)\left(p+1\right)}{4}\right)\right) - \left(n-s\right) \\
&=& \hspace{-0.2cm} 1 - \Delta_s.
\end{eqnarray*}
We now remove the condition $\Pi$ by law of total probability, 
\begin{equation*}
P\left(E_s\right)=\sum_{\Pi}P\left(E_s|\Pi\right)P\left(\Pi\right) \ge 1 - \Delta_s.
\end{equation*}
\noindent Denote columns of $R_{22}$ as $r_{s}, r_{s+1}, \dots, r_n$, columns of $\widehat{R}_{22}$ as  $\widehat{r}_{s}, \widehat{r}_{s+1}$, $ \dots, \widehat{r}_n$ {\em after} the $s^{{\mbox{th}}}$ column pivot, then $\|\widehat{r}_{s}\|_2 \ge \|\widehat{r}_j\|_2$ for $s+1 \le j \le n.$
With probability at least $1 - \Delta_s$,  event $E_s$ holds so that 
\begin{equation*}
\frac{\|r_j\|_2}{\|r_{s}\|_2} \le \frac{\sqrt{1+\varepsilon}\|\widehat{r}_j\|_2}{\sqrt{1-\varepsilon}\|\widehat{r}_{s}\|_2} \le \sqrt{\frac{1+\varepsilon}{1-\varepsilon}},\; \mbox{for all $s+1 \le j \le n$,} 
\end{equation*}
\noindent equivalent to inequality \eqref{Eqn:rss} after one-step QR.  
\end{proof}

\begin{proof}[Proof of Theorem \ref{Thm:reliability of RQRCP}]Denote $E$ as the event where all column lengths used in comparison satisfy the Johnson$-$Lindenstrauss condition so that $E = \bigcap_{s=1}^{k}E_s$, with $E_s$ being the event where all columns of $R_{22}$ satisfy the Johnson$-$Lindenstrauss condition in $s^{\mbox{th}}$ step. By Lemma \ref{lemma: reliability of RQRCP without updating B}, 
\begin{eqnarray*}
P\left(E\right)\!\!\!\!\!&\ge&\!\!\!\!\!\sum_{s=1}^{k} P\left(E_s\right) - \left(k-1\right) \ge \sum_{s=1}^{k} \left(1-\Delta_{s}\right) - \left(k-1\right) \\
&=&\!\!\!\!\! 1 - \sum_{s=1}^{k} \Delta_{s}\\
&=&\!\!\!\!\! 1 - \sum_{s=1}^{k} 2\left(n-s+1\right)\exp\left(\frac{-\left(\varepsilon^2-\varepsilon^3\right)\left(p+1\right)}{4}\right) \\
&\ge&\!\!\!\!\! 1 - 2n\,k\exp\left(\frac{-\left(\varepsilon^2-\varepsilon^3\right)\left(p+1\right)}{4}\right),
\end{eqnarray*}
which is at least $1-\Delta$ for the choice of $p$ in Theorem \ref{Thm:reliability of RQRCP}.
\end{proof}

Theorem \ref{Thm:reliability of RQRCP} suggests that $p$ needs only to grow logarithmically with $n$, $k$ and $1/\Delta$ for reliable column selection. For illustration, if we choose $n = 1000, k = 200, \varepsilon = 0.5, \Delta = 0.05$, then $p \ge 508$. However, in practice, $p$ values like $5 \sim 20$ suffice for RQRCP to obtain high quality low-rank approximations.

\section{Spectrum-revealing QR Factorization}
\subsection{Introduction}
Before we introduce spectrum-revealing QR factorization (SRQR), we need Lemma \ref{lemma:singular value inequality} for partial QR factorizations with column interchanges. Let $\sigma_j(X)$ be the $j^{\mbox{th}}$ largest singular value of any matrix $X$, and let $\lambda_j\left(H\right)$ be the $j^{\mbox{th}}$ largest eigenvalue of any positive semidefinite matrix $H$. 

\begin{lem}\label{lemma:singular value inequality}
Let $A \in \mathbb{R}^{m \times n}$, with $1 \le l \le \min(m,n)$. For any permutation $\Pi$, consider a block QR factorization
\begin{equation*}
A \Pi = Q \left(
\begin{array}{cc}
R_{11} & R_{12} \\
& R_{22}
\end{array}
\right),
\end{equation*}
\noindent with $R_{11} \in \mathbb{R}^{l \times l}$. Define 
$
\widetilde{R} = \left(
R_{11} \; R_{12}
\right).
$
For any $1 \le k \le l$, denote $\widetilde{R}_{k}$ the rank-$k$ truncated SVD of $\widetilde{R}$. Therefore, 
\begin{equation}\label{Eqn:singular value of A and R}
\sigma_j^2\left(A\right) \le \sigma_j^2\left(\widetilde{R}\right) + \left\|R_{22}\right\|_2^2, \quad (1 \le j \le k),
\end{equation}
\begin{equation}\label{Eqn:2norm of truncated QR}
\left\|A\Pi - Q\left(
\begin{array}{c}
\widetilde{R}_k \\
0
\end{array}
\right)\right\|_2 \le \sigma_{k+1}\left(A\right)\sqrt{1 + \left(\frac{\left\|R_{22}\right\|_2}{\sigma_{k+1}\left(A\right)}\right)^2}.
\end{equation}
\end{lem}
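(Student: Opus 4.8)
The plan is to route everything through the Gram matrix $R^{T}R$, exploiting that $Q$ and the permutation $\Pi$ are orthogonal: writing $R=\left(\begin{smallmatrix}R_{11}&R_{12}\\0&R_{22}\end{smallmatrix}\right)$ we have $\sigma_j(A)=\sigma_j(A\Pi)=\sigma_j(R)$, i.e.\ $\sigma_j^2(A)=\lambda_j\!\left(R^{T}R\right)$. Both inequalities of the lemma then reduce to Weyl's monotonicity bound for a symmetric matrix perturbed by a positive semidefinite one.

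For \eqref{Eqn:singular value of A and R}, I multiply out the blocks. Since $\widetilde{R}^{T}\widetilde{R}=\left(\begin{smallmatrix}R_{11}^{T}R_{11}&R_{11}^{T}R_{12}\\R_{12}^{T}R_{11}&R_{12}^{T}R_{12}\end{smallmatrix}\right)$, a direct computation gives
\[
R^{T}R=\begin{pmatrix}R_{11}^{T}R_{11}&R_{11}^{T}R_{12}\\ R_{12}^{T}R_{11}&R_{12}^{T}R_{12}+R_{22}^{T}R_{22}\end{pmatrix}=\widetilde{R}^{T}\widetilde{R}+\begin{pmatrix}0&0\\0&R_{22}^{T}R_{22}\end{pmatrix}.
\]
The correction term is positive semidefinite with largest eigenvalue $\|R_{22}\|_2^2$, hence $\preceq\|R_{22}\|_2^2 I_n$, so Weyl gives $\lambda_j(R^{T}R)\le\lambda_j(\widetilde{R}^{T}\widetilde{R})+\|R_{22}\|_2^2$, which is \eqref{Eqn:singular value of A and R} (and in fact holds for every $1\le j\le l$).

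For \eqref{Eqn:2norm of truncated QR}, note that $A\Pi-Q\left(\begin{smallmatrix}\widetilde{R}_k\\0\end{smallmatrix}\right)=QG$, where $G$ is the $m\times n$ matrix whose first $l$ rows are $\widetilde{R}-\widetilde{R}_k$ and whose last $m-l$ rows are those of $R$, namely $(\,0\ \ R_{22}\,)$. Orthogonal invariance makes the left side of \eqref{Eqn:2norm of truncated QR} equal to $\|G\|_2$, and the same block computation yields
\[
G^{T}G=(\widetilde{R}-\widetilde{R}_k)^{T}(\widetilde{R}-\widetilde{R}_k)+\begin{pmatrix}0&0\\0&R_{22}^{T}R_{22}\end{pmatrix}\preceq(\widetilde{R}-\widetilde{R}_k)^{T}(\widetilde{R}-\widetilde{R}_k)+\|R_{22}\|_2^2 I_n,
\]
so $\|G\|_2^2\le\|\widetilde{R}-\widetilde{R}_k\|_2^2+\|R_{22}\|_2^2=\sigma_{k+1}^2(\widetilde{R})+\|R_{22}\|_2^2$ by the Eckart--Young--Mirsky theorem. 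Finally $\widetilde{R}$ is the submatrix of $R$ formed by its first $l$ rows, so $\widetilde{R}^{T}\widetilde{R}\preceq R^{T}R$ and thus $\sigma_{k+1}^2(\widetilde{R})\le\sigma_{k+1}^2(A)$; combining and rewriting $\sqrt{\sigma_{k+1}^2(A)+\|R_{22}\|_2^2}=\sigma_{k+1}(A)\sqrt{1+\left(\|R_{22}\|_2/\sigma_{k+1}(A)\right)^2}$ gives \eqref{Eqn:2norm of truncated QR}.

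I do not expect a conceptual obstacle; the main place to be careful is the $2\times2$ block bookkeeping, ensuring that the cross terms in $R^{T}R$ and $G^{T}G$ cancel exactly against $\widetilde{R}^{T}\widetilde{R}$ and leave precisely the block $\mathrm{diag}(0,R_{22}^{T}R_{22})$. One should also record explicitly that $\widetilde{R}_k$ is taken in the spectral norm (so that $\|\widetilde{R}-\widetilde{R}_k\|_2=\sigma_{k+1}(\widetilde{R})$), and flag the degenerate case $\sigma_{k+1}(A)=0$, in which \eqref{Eqn:2norm of truncated QR} is read in the limiting sense: there $\sigma_{k+1}(\widetilde{R})=0$ forces $\widetilde{R}=\widetilde{R}_k$, the left side collapses to $\|R_{22}\|_2$, and the stated bound is an equality.
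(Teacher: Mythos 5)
Your proof is correct and is essentially the same as the paper's: both route through $\sigma_j^2(A)=\lambda_j(R^TR)$, split the Gram matrix as $\widetilde{R}^T\widetilde{R}+\mathrm{diag}(0,R_{22}^TR_{22})$, apply Weyl's inequality for the singular-value bound, and bound $\|G\|_2^2$ by $\|\widetilde{R}-\widetilde{R}_k\|_2^2+\|R_{22}\|_2^2$ together with $\sigma_{k+1}(\widetilde{R})\le\sigma_{k+1}(A)$ for the residual bound. The extra remarks you add (naming Eckart--Young--Mirsky, justifying $\sigma_{k+1}(\widetilde{R})\le\sigma_{k+1}(A)$ via $\widetilde{R}^T\widetilde{R}\preceq R^TR$, and flagging the degenerate $\sigma_{k+1}(A)=0$ case) only make explicit steps the paper leaves implicit.
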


We introduced a new parameter $l$, in Lemma \ref{lemma:singular value inequality}. It will become clear later on that an $l$-step partial QR with properly chosen $\Pi$ for an $l$ somewhat larger than $k$ can lead to a much better rank-$k$ approximation than a $k$-step partial QR. 

\begin{proof}[Proof of Lemma \ref{lemma:singular value inequality}]
By definition, for $1 \le j \le k$, 
\begin{eqnarray*}
\sigma_j^2\left(A\right) &=& \lambda_j\left(\Pi^TA^TA\Pi\right) \\
&=& 
\lambda_j\left(\widetilde{R}^T \widetilde{R} + \left(
\begin{array}{cc}
0 & 0 \\
0 & R_{22}^TR_{22}
\end{array}
\right)\right) \\
&\le& \lambda_j\left(\widetilde{R}^T\widetilde{R}\right) + \left\|R_{22}^TR_{22}\right\|_2 \\
&=& \sigma_j^2\left(\widetilde{R}\right) + \left\|R_{22}\right\|_2^2.
\end{eqnarray*}
\begin{eqnarray*}
&&\left\|A\Pi - Q\left(
\begin{array}{c}
\widetilde{R}_k \\
0
\end{array}
\right)\right\|_2^2 
= \left\|Q\left(
\begin{array}{c}
\widetilde{R} - \widetilde{R}_k \\
\overline{R}
\end{array}
\right)\right\|_2^2 \\
&\le& \left\|\widetilde{R}-\widetilde{R}_k\right\|_2^2 + \left\|\overline{R}\right\|_2^2 \le \sigma_{k+1}^2(A) + \left\|R_{22}\right\|_2^2, 
\end{eqnarray*}
where $\overline{R} \stackrel{def}{=} \left(
\begin{array}{cc}
0 & R_{22}
\end{array}
\right)$.
\end{proof}

\subsection{Spectrum-revealing QR}
We exhibit the properties of equation \eqref{Eqn:singular value of A and R} in more detail.
\begin{eqnarray*}
\sigma_j^2(A) &\le& \sigma_j^2\left(\widetilde{R}\right)\left(1 + \frac{\left\|R_{22}\right\|_2^2}{\sigma_j^2\left(\widetilde{R}\right)}\right) \\
&\le& \sigma_j^2\left(\widetilde{R}\right)\left(1 + \frac{\left\|R_{22}\right\|_2^2}{\sigma_k^2\left(\widetilde{R}\right)}\right), \quad \mbox{or} 
\end{eqnarray*}
\begin{equation}\label{Eqn:singular value inequality1}
\sigma_j\left(\widetilde{R}\right) \ge \frac{\sigma_j(A)}{\sqrt{1 + \left(\frac{\left\|R_{22}\right\|_2}{\sigma_k\left(\widetilde{R}\right)}\right)^2}}, \quad (1 \le j \le k).
\end{equation}
Additionally,
\begin{eqnarray*}
\sigma_j^2(A) \!\!\!\!&\le&\!\!\!\! \sigma_j^2\left(\widetilde{R}\right)\left(1 + \frac{\sigma_j^2(A)}{\sigma_j^2\left(\widetilde{R}\right)}\frac{\left\|R_{22}\right\|_2^2}{\sigma_j^2(A)}\right) \\
&\le&\!\!\!\! \sigma_j^2\left(\widetilde{R}\right)\left(1 + \frac{\left(\sigma_j^2\left(\widetilde{R}\right) + \left\|R_{22}\right\|_2^2\right)}{\sigma_j^2\left(\widetilde{R}\right)} \frac{\left\|R_{22}\right\|_2^2}{\sigma_j^2(A)}\right) \\
&\le&\!\!\!\! \sigma_j^2\left(\widetilde{R}\right) \left(1 + \left(1 + \frac{\left\|R_{22}\right\|_2^2}{\sigma_k^2\left(\widetilde{R}\right)}\right)\frac{\left\|R_{22}\right\|_2^2}{\sigma_j^2(A)}\right), \; \mbox{or} 
\end{eqnarray*}
\begin{equation}\label{Eqn:singular value inequality2}
\sigma_j\left(\widetilde{R}\right) \ge \frac{\sigma_j(A)}{\sqrt{1 + \left(1 + \frac{\left\|R_{22}\right\|_2^2}{\sigma_k^2\left(\widetilde{R}\right)}\right)\frac{\left\|R_{22}\right\|_2^2}{\sigma_j^2(A)}}}, \quad (1 \le j \le k).
\end{equation}

By the Cauchy interlacing property,
\begin{equation*}
\sigma_l\left(R_{11}\right)\le\sigma_l\left(\widetilde{R}\right)\le\sigma_l(A) \quad \mbox{and} \quad \left\|R_{22}\right\|_2 \ge \sigma_{l+1}(A).
\end{equation*}
Relations \eqref{Eqn:2norm of truncated QR}\eqref{Eqn:singular value inequality1}\eqref{Eqn:singular value inequality2} show that we can reveal the leading singular values of $A$ in $\widetilde{R}$ well by computing a $\Pi$ that ensures 
\begin{equation}\label{Eqn:R22 bound condition of SRQR}
\left\|R_{22}\right\|_2 \le O(\sigma_{l+1}(A)).
\end{equation}
Indeed, equation \eqref{Eqn:singular value inequality2} further ensures that such permutation $\Pi$ would make all the leading singular values of $\widetilde{R}$ very close to those of $A$ if the singular values of $A$ decay relatively quickly. Finally, equation \eqref{Eqn:2norm of truncated QR} guarantees that such a permutation would also lead to a high quality approximation measured in $2$-norm. In summary, for a successful SRQR, we only need to find a $\Pi$ that satisfies equation \eqref{Eqn:R22 bound condition of SRQR}.

For most matrices in practice, both QRCP and RQRCP compute high quality SRQRs, with RQRCP being  significantly more efficient. We will develop an algorithm for computing an SRQR in $3$ stages:
\begin{enumerate}
  \item Compute an $l$-step QRCP or RQRCP.
  \item Verify condition \eqref{Eqn:R22 bound condition of SRQR}.
  \item Compute an SRQR if condition \eqref{Eqn:R22 bound condition of SRQR} does not hold. 
\end{enumerate}

In next section, we develop a rather efficient scheme to verify if a partial QR factorization satisfies \eqref{Eqn:R22 bound condition of SRQR}.

\subsection{SRQR verification}
Given a QRCP or RQRCP, we discuss an efficient scheme to check whether it satisfies condition \eqref{Eqn:R22 bound condition of SRQR}. We define
\begin{equation}\label{Eqn:definition of g1 and g2}
g_1 \stackrel{def}{=}  \frac{\left\|R_{22}\right\|_{1,2}}{\left|\alpha\right|} \quad \mbox{and} \quad g_2 \stackrel{def}{=}  \left|\alpha\right| \left\|\widehat{R}^{-T}\right\|_{1,2},
\end{equation}
where $\left\|X\right\|_{1,2}$ is the largest column norm of any $X$ and 
\begin{equation*}
\widehat{R} \stackrel{def}{=} \left(
\begin{array}{cc}
R_{11} & a \\
& \alpha
\end{array}
\right) \;\; {\mbox{is a leading submatrix of $R$,}} 
\end{equation*}
\noindent where we do one step of QRCP or RQRCP on $R_{22}$. Then
\begin{eqnarray}
\left\|R_{22}\right\|_2 &=& \frac{\left\|R_{22}\right\|_2}{\left\|R_{22}\right\|_{1,2}}\left\|R_{22}\right\|_{1,2}  = \tau \sigma_{l+1}\left(A\right),\label{Eqn:inequality of tau} \\
\mbox{where} \quad \tau &\stackrel{def}{=}& g_1 g_2 \frac{\left\|R_{22}\right\|_2}{\left\|R_{22}\right\|_{1,2}} \frac{\left\|\widehat{R}^{-T}\right\|_{1,2}^{-1}}{\sigma_{l+1}\left(A\right)}.\nonumber 
\end{eqnarray}
\noindent While $\tau$ depends on $A$ and $\Pi$, it can be upper bounded as 
\begin{eqnarray}\label{Eqn:upper bound of tau}
\tau &=& g_1 g_2 \frac{\left\|R_{22}\right\|_2}{\left\|R_{22}\right\|_{1,2}} \frac{\left\|\widehat{R}^{-T}\right\|_{1,2}^{-1}}{\sigma_{l+1}\left(\widehat{R}\right)} \frac{\sigma_{l+1}\left(\widehat{R}\right)}{\sigma_{l+1}\left(A\right)} \\
&\le& g_1 g_2 \sqrt{(l+1)(n-l)}. \nonumber
\end{eqnarray}
On the other hand, 
\begin{eqnarray}
\left\|R_{22}\right\|_2 &=& \frac{\left\|R_{22}\right\|_2}{\left\|R_{22}\right\|_{1,2}}\left\|R_{22}\right\|_{1,2} \le  \widehat{\tau} \sigma_{l}\left(\widetilde{R}\right),\label{Eqn:inequality of widehat}\\
\mbox{where} \quad \widehat{\tau} &\stackrel{def}{=}& g_1 g_2 \frac{\left\|R_{22}\right\|_{2}}{\left\|R_{22}\right\|_{1,2}} \frac{\left\|R_{11}^{-T}\right\|_{1,2}^{-1}}{\sigma_l\left(\widetilde{R}\right)}.\nonumber 
\end{eqnarray}
\noindent While $\widehat{\tau}$ depends on $A$ and $\Pi$, it can be upper bounded as
\begin{equation}\label{Eqn:upper bound of widehat tau}
\widehat{\tau} = g_1 g_2 \frac{\left\|R_{22}\right\|_{2}}{\left\|R_{22}\right\|_{1,2}} \frac{\left\|R_{11}^{-T}\right\|_{1,2}^{-1}}{\sigma_l\left(R_{11}\right)} \frac{\sigma_l\left(R_{11}\right)}{\sigma_l\left(\widetilde{R}\right)} \le g_1 g_2 \sqrt{l(n-l)} .
\end{equation}
\noindent It is typical for the first two ratios in equations \eqref{Eqn:upper bound of tau} and \eqref{Eqn:upper bound of widehat tau} to be of order $O(1)$, and the last ratio $o(1)$. Thus, even though the last upper bounds in equations \eqref{Eqn:upper bound of tau} and \eqref{Eqn:upper bound of widehat tau} grow with matrix dimensions, $\tau$ is small to modest in practice.

For notational simplicity, we further define $\overline{\tau} \stackrel{def}{=} \widehat{\tau}\frac{\sigma_l\left(\widetilde{R}\right)}{\sigma_k\left(\widetilde{R}\right)}$. Plugging equation \eqref{Eqn:inequality of widehat} into equation \eqref{Eqn:singular value inequality1}, we obtain
\begin{equation}\label{Eqn:singular value inequality1 with tau}
\sigma_j\left(\widetilde{R}\right) \ge \frac{\sigma_j(A)}{\sqrt{1 + \overline{\tau}^2}}, \quad (1 \le j \le k).
\end{equation}
\noindent Plugging both equations \eqref{Eqn:inequality of tau} and \eqref{Eqn:inequality of widehat} into equation \eqref{Eqn:singular value inequality2},
\begin{equation*}
\sigma_j\left(\widetilde{R}\right) \ge \frac{\sigma_j(A)}{\sqrt{1 + \tau^2 \left(1 + \overline{\tau}^2\right) \left(\frac{\sigma_{l+1}(A)}{\sigma_j(A)}\right)^2}}, \quad (1 \le j \le k).
\end{equation*}
\noindent Combining the last equation with \eqref{Eqn:singular value inequality1 with tau}, for $1 \le j \le k$, 
\begin{equation}\label{Eqn:singular value inequality2 with tau}
\sigma_j\left(\widetilde{R}\right) \ge \frac{\sigma_j(A)}{\sqrt{1 + \min\left(\overline{\tau}^2, \tau^2\left(1 + \overline{\tau}^2\right) \left(\frac{\sigma_{l+1}(A)}{\sigma_j(A)}\right)^2\right)}}.
\end{equation}

\noindent Equation \eqref{Eqn:singular value inequality2 with tau} shows that, under definition \eqref{Eqn:definition of g1 and g2} and with $\widetilde{R}$, we can reveal at least a dimension dependent fraction of all the leading singular values of $A$ and indeed approximate them very accurately in case they decay relatively quickly. 

Finally, plugging equation \eqref{Eqn:inequality of tau} into equation \eqref{Eqn:2norm of truncated QR},
\begin{equation}\label{Eqn:2norm bound with tau}
\left\|A\Pi - Q\left(
\begin{array}{c}
\widetilde{R}_k \\
0
\end{array}
\right)\right\|_2 \le
\sigma_{k+1}(A) \sqrt{1 + \tau^2 \left(
\frac{\sigma_{l+1}(A)}{\sigma_{k+1}(A)}
\right)^2}.
\end{equation}
\noindent Equation \eqref{Eqn:2norm bound with tau} shows that we can compute a rank-$k$ approximation that is up to a factor of $\sqrt{1 + \tau^2 \left(
\frac{\sigma_{l+1}(A)}{\sigma_{k+1}(A)}
\right)^2}$ from optimal. In situations where singular values of $A$ decay relatively quickly, our rank-$k$ approximation is about as accurate as the truncated SVD with a choice of $l$ such that 
\begin{equation*}
\frac{\sigma_{l+1}(A)}{\sigma_{k+1}(A)} = o(1).
\end{equation*}

\subsection{Spectrum-revealing bounds of QRCP and RQRCP}
We develop spectrum-revealing bounds of QRCP and RQRCP. It comes down to estimating upper bounds on $g_1$ and $g_2$. We need Lemma \ref{lemma: inverse 1}, presented below without a proof:
\begin{lem}\label{lemma: inverse 1}
Let $W=(w_{i,j}) \in \mathbb{R}^{n \times n}$ be an upper or lower triangular matrix with $w_{i,i}=1$ and $\left|w_{ij}\right| \le c \;(i \neq j)$. Then 
\begin{equation*}
\left\|W^{-1}\right\|_{1,2}\le\left\|W^{-1}\right\|_1\le(1+c)^{n-1}.
\end{equation*}
\end{lem}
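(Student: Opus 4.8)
The plan is to establish the second, substantive inequality $\left\|W^{-1}\right\|_1 \le (1+c)^{n-1}$ by bounding the columns of $W^{-1}$ directly through triangular substitution; the first inequality $\left\|W^{-1}\right\|_{1,2} \le \left\|W^{-1}\right\|_1$ is immediate, since $\|v\|_2 \le \|v\|_1$ for every vector $v$, so the largest column $2$-norm of $W^{-1}$ is at most its largest column $1$-norm, which is the induced matrix $1$-norm.

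First I would reduce to the upper triangular case. If $W$ is lower triangular, let $J$ be the $n\times n$ reversal (anti-identity) permutation matrix; then $JWJ$ is upper triangular with unit diagonal and the same bound $|(JWJ)_{ij}| \le c$ off the diagonal, while $\left\|(JWJ)^{-1}\right\|_1 = \left\|J W^{-1} J\right\|_1 = \left\|W^{-1}\right\|_1$, because conjugation by $J$ only reverses the orders of the rows and of the columns and hence changes no column sum. So assume from now on that $W$ is upper triangular, $w_{ii}=1$, $|w_{ij}|\le c$ for $i<j$.

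Next, fix a column index $j$ and let $y = W^{-1} e_j$, i.e. the solution of $Wy = e_j$. Solving by back-substitution from $i=n$ downward gives $y_i = 0$ for $i>j$, then $y_j = 1$, and for $i<j$, $y_i = -\sum_{l=i+1}^{j} w_{il}\,y_l$, so $|y_i| \le c\sum_{l=i+1}^{j}|y_l|$. Introducing the partial sums $\sigma_i := \sum_{l=i}^{j}|y_l|$ for $1\le i\le j$, we have $\sigma_j = |y_j| = 1$ and $\sigma_i = |y_i| + \sigma_{i+1} \le (1+c)\,\sigma_{i+1}$, whence by induction $\sigma_i \le (1+c)^{j-i}$. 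In particular the $1$-norm of the $j$-th column of $W^{-1}$ equals $\sum_{i=1}^{n}|y_i| = \sigma_1 \le (1+c)^{j-1} \le (1+c)^{n-1}$. Taking the maximum over $j=1,\dots,n$ yields $\left\|W^{-1}\right\|_1 \le (1+c)^{n-1}$, which together with the first step finishes the proof.

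I do not expect a serious obstacle here; the only points requiring care are the bookkeeping in the back-substitution (that $y_i$ vanishes above coordinate $j$, so the recursion only involves indices $\le j$) and noticing that the column sum telescopes with ratio exactly $1+c$. As a remark I would note an alternative route: write $W = I - N$ with $N$ strictly triangular, hence nilpotent with $N^n = 0$, expand $W^{-1} = \sum_{t=0}^{n-1} N^t$, dominate entrywise by $\sum_{t=0}^{n-1}(cE)^t = (I-cE)^{-1}$ where $E$ is the strictly triangular all-ones matrix, and evaluate the column sums of $(I-cE)^{-1}$ in closed form (the $j$-th column sum is exactly $(1+c)^{j-1}$); this gives the same bound and shows it is sharp.
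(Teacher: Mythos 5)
The paper states this lemma explicitly without a proof (``We need Lemma~\ref{lemma: inverse 1}, presented below without a proof''), so there is no paper argument to compare against. Your proof is correct: the reduction of the lower-triangular case by conjugation with the reversal permutation is valid (it permutes rows and columns and hence preserves the induced $1$-norm and the off-diagonal bound); the back-substitution recursion $|y_i|\le c\sum_{l=i+1}^{j}|y_l|$ and the telescoping bound $\sigma_i\le(1+c)\sigma_{i+1}$ with $\sigma_j=1$ give exactly $\sigma_1\le(1+c)^{j-1}$; and $\left\|W^{-1}\right\|_{1,2}\le\left\|W^{-1}\right\|_1$ follows from $\|v\|_2\le\|v\|_1$. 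Your remark on the Neumann-series route ($W=I-N$, $W^{-1}=\sum_{t=0}^{n-1}N^t$, entrywise domination by $(I-cE)^{-1}$) is also sound and correctly identifies that the bound is attained for the worst-case comparison matrix, i.e.\ the bound is sharp. This is the standard argument used for this classical estimate on unit triangular matrices in the rank-revealing QR literature; either of your two routes would serve as the omitted proof.
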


For QRCP, $g_1 = 1$ since $\left|\alpha\right|$ is the largest column norm in the trailing matrix $R_{22}$. For $g_2$, decompose $\widehat{R} = DW$ where $D$ is the diagonal of $\widehat{R}$ and $W$ satisfies Lemma \ref{lemma: inverse 1} with $c=1$.
\begin{equation*}
g_2 = \left|\alpha\right| \left\|D^{-T} W^{-T}\right\|_{1,2} \le \left\|W^{-T}\right\|_{1,2} \le \left\|W^{-T}\right\|_1
\le 2^l.
\end{equation*}
For RQRCP, we find upper bounds for $g_1$ and $g_2$. According to our probability analysis of RQRCP, the following inequalities are valid with probability $1 - \Delta$ for given $\varepsilon$.
\begin{eqnarray*}
|\alpha| &\ge& \sqrt{\frac{1-\varepsilon}{1+\varepsilon}}\left\|R_{22}\right\|_{1,2} \Rightarrow g_1 = \frac{\left\|R_{22}\right\|_{1,2}}{|\alpha|} \le \sqrt{\frac{1+\varepsilon}{1-\varepsilon}}. \\
g_2^2 
&=& \alpha^2 \left|\left|\left(
\begin{array}{cc}
R_{11}^{-T} &  \\
-\frac{1}{\alpha}a^TR_{11}^{-T}  & \frac{1}{\alpha} \\
\end{array}
\right)\right|\right|_{1,2}^2 \\
&\le& \alpha^2 \max \bigg\{\left|\left|R_{11}^{-T}\right|\right|_{1,2}^2 + \left|\left|-\frac{1}{\alpha}a^TR_{11}^{-T}\right|\right|_{1,2}^2, \frac{1}{\alpha^2}\bigg\}.
\end{eqnarray*}

We decompose $R_{11} = DW$ where $D$ is the diagonal of $R_{11}$ and $W$ satisfies Lemma \ref{lemma: inverse 1} with $c=\sqrt{\frac{1+\varepsilon}{1-\varepsilon}}$. Assume the smallest entry in $D$ is $t$, then $t$ satisfies $t \ge \sqrt{\frac{1-\varepsilon}{1+\varepsilon}} \sigma_l(A)$,
\begin{eqnarray*}
&&\hspace{-0.3cm}\left\|R_{11}^{-T}\right\|_{1,2}^2
= \left\|D^{-1}\left(W^T\right)^{-1}\right\|_{1,2}^2
\le \frac{1}{t^2} \left\|\left(W^T\right)^{-1}\right\|_{1,2}^2 \\
% &\le& \frac{1+\varepsilon}{1-\varepsilon} \cdot \frac{1}{\sigma_l^2(A)} \cdot \left\|\left(W^T\right)^{-1}\right\|_1^2 \\
&\le& \frac{1}{\sigma_l^2(A)} \cdot \left(\frac{1+\varepsilon}{1-\varepsilon}\right) \cdot \left(1+\sqrt{\frac{1+\varepsilon}{1-\varepsilon}}\right)^{2l-2}, \quad \mbox{and} \\
&&\left|\left|-\frac{1}{\alpha}a^TR_{11}^{-T}\right|\right|_{1,2}^2 = \frac{1}{\alpha^2} \left|\left|(D^{-1}a)^T \left(W^T\right)^{-1}\right|\right|_{1,2}^2 \\
% &\le& \frac{1}{\alpha^2} \frac{1+\varepsilon}{1-\varepsilon} \left|\left|\left(W^T\right)^{-1}\right|\right|_1^2 \\
&\le& \frac{1}{\alpha^2} \cdot \left(\frac{1+\varepsilon}{1-\varepsilon}\right) \cdot \left(1+\sqrt{\frac{1+\varepsilon}{1-\varepsilon}}\right)^{2l-2}.
\end{eqnarray*}
\noindent It follows that 
\begin{eqnarray*}
g_2^2 &\le& \frac{1+\varepsilon}{1-\varepsilon} \cdot \left(1+\sqrt{\frac{1+\varepsilon}{1-\varepsilon}}\right)^{2l-2} \cdot \left(\frac{\alpha^2}{\sigma_l^2(A)}+1\right) \\
&\le& \frac{1+\varepsilon}{1-\varepsilon} \cdot \left(1+\sqrt{\frac{1+\varepsilon}{1-\varepsilon}}\right)^{2l-2} \cdot \left(\frac{1+\varepsilon}{1-\varepsilon}+1\right) \\
&\Rightarrow& g_2 \le \frac{\sqrt{2(1+\varepsilon)}}{1-\varepsilon} \left(1+\sqrt{\frac{1+\varepsilon}{1-\varepsilon}}\right)^{l-1}. 
\end{eqnarray*} 
In summary, 
\begin{eqnarray}
g_1 &\le & {\displaystyle \left\{\begin{array}{ll}
    1 & \mbox{for QRCP}, \\
  \sqrt{\frac{1+\varepsilon}{1-\varepsilon}}   &  \mbox{for RQRCP}.
\end{array}\right. } \label{Eqn:g1}\\
\; g_2 &\le & {\displaystyle \left\{\begin{array}{ll}
    2^l &  \mbox{for QRCP}, \\
  \frac{\sqrt{2(1+\varepsilon)}}{1-\varepsilon} \left(1+\sqrt{\frac{1+\varepsilon}{1-\varepsilon}}\right)^{l-1}   & \mbox{for RQRCP}.
\end{array}\right.} \label{Eqn:g2}
\end{eqnarray}

\subsection{An algorithm to compute SRQR}
\begin{algorithm}
\caption{Spectrum-revealing QR factorization (SRQR)}\label{algorithm_srqr}
\begin{algorithmic}
\STATE $\textbf{Inputs:}$
\STATE $A$ is $m \times n$ matrix, 
\STATE $l \ge k$, the target rank, $1 \le k \le \min\left(m,n\right)$
\STATE $g > 1$ is user defined tolerance for $g_2$
\STATE $\textbf{Outputs:}$
\STATE $Q$ is $m \times m$ orthogonal matrix
\STATE $R$ is $m \times n$ upper trapezoidal matrix
\STATE $\Pi$ is $n \times n$ permutation matrix such that $A\Pi = QR$
\STATE $\textbf{Algorithm:\,\,}$ 
\STATE Compute $Q, R, \Pi$ with Algorithm \ref{Algorithm_Partial QRCP with random sampling (RQRCP)}
\STATE Compute squared 2-norm of the columns of $B(:,l+1:n): \widehat{r}_i \; (l+1 \le i \le n)$
\STATE Approximate squared 2-norm of the columns of $A(l+1:m,l+1:n): r_i = \frac{\widehat{r}_i}{b+p} \; (l+1 \le i \le n)$
\STATE $\imath ={\bf argmax}_{l+1 \le i \le n}\{r_i\}$
\STATE Swap $\imath$-th and $(l+1)$-st columns of $A, \Pi, r$
\STATE One-step QR factorization of $A(l+1:m,l+1:n)$
\STATE $\left|\alpha\right| = R_{l+1,l+1}$
\STATE $r_i = r_i - A(l+1,i)^2 \; (l+2 \le i \le n)$ 
\STATE Generate random matrix $\Omega \in\mathcal{N}(0,1)^{d\times(l+1)} \; (d \ll l)$ 
\STATE Compute $g_2 = \left|\alpha\right| \left\|\widehat{R}^{-T}\right\|_{1,2} \approx \frac{\left|\alpha\right|}{\sqrt{d}} \left\|\Omega \widehat{R}^{-T}\right\|_{1,2}$
\STATE \WHILE{$g_2 > g$}
\STATE $\imath ={\bf argmax}_{1 \le i \le l+1}\{i\text{th column norm of } \Omega \widehat{R}^{-T}\}$
\STATE Swap $\imath$-th and $(l+1)$-st columns of $A$ and $\Pi$ in a Round Robin rotation
\STATE Givens-rotate $R$ back into upper-trapezoidal form
\STATE $r_{l+1} = R_{l+1,l+1}^2$
\STATE $r_i = r_i + A(l+1,i)^2 \; (l+2 \le i \le n)$ 
\STATE $\imath ={\bf argmax}_{l+1 \le i \le n}\{r_i\}$
\STATE Swap $\imath$-th and $(l+1)$-st columns of $A, \Pi, r$
\STATE One-step QR factorization of $A(l+1:m,l+1:n)$
\STATE $\left|\alpha\right| = R_{l+1,l+1}$
\STATE $r_i = r_i - A(l+1,i)^2 \; (l+2 \le i \le n)$ 
\STATE Generate random matrix $\Omega \in\mathcal{N}(0,1)^{d\times(l+1)} (d \ll l)$ 
\STATE Compute $g_2 = \left|\alpha\right| \left\|\widehat{R}^{-T}\right\|_{1,2} \approx \frac{\left|\alpha\right|}{\sqrt{d}} \left\|\Omega \widehat{R}^{-T}\right\|_{1,2}$
\ENDWHILE
\end{algorithmic}
\end{algorithm}

The parameter $g_1$ is always modest in equation \eqref{Eqn:g1}. Despite the exponential nature of the upper bound on the parameter $g_2$ in equation \eqref{Eqn:g2}, $g_2$ has always been known to be modest with real data matrices. However, it can be exceptionally large for contrived pathological matrices \cite{kahan1966numerical}. This motivates Algorithm \ref{algorithm_srqr} for computing SRQR. Algorithm \ref{algorithm_srqr} computes a partial QR factorization with RQRCP. It then efficiently estimates $g_2$, and performs additional column interchanges for a guaranteed SRQR only if $g_2$ is too large. 

After RQRCP, Algorithm \ref{algorithm_srqr} uses a randomized scheme to quickly and reliably estimate $g_2$ and compares it to a user-defined tolerance $g > 1$. Algorithm \ref{algorithm_srqr} exits if the estimated $g_2 \le g$. Otherwise, it swaps the $\imath^{\mbox{th}}$ and $(l+1)^{\mbox{st}}$ columns of $A$ and $R$. Each swap will make the $\imath^{\mbox{th}}$ column out of the upper-triangular form in $R$. A round robin rotation is applied to the columns of $A$ and $R$, followed by a quick sequence of Givens rotations left multiplied to $R$ to restore its upper-triangular form. The while loop in Algorithm \ref{algorithm_srqr} will stop, after a finite number of swaps, leading to a permutation that ensures  $g_2 \le g$ with high probability.

The cost of estimating $g_2$ is $O(d\,l^2)$, and the cost for one extra swap is $O(n\,l)$. Matrix $R$ can be SVD-compressed into a rank-$k$ matrix optionally, at cost of $O(n\,l^2)$. There is no practical need for extra swaps for real data matrices, making Algorithm \ref{algorithm_srqr} only slightly more expensive than RQRCP in general. At most a few swaps are enough, adding an insignificant amount of computation to the cost of RQRCP, for pathological matrices like the Kahan matrix \cite{kahan1966numerical}. 

\section{Experimental performance}
Experiments in sections \ref{subsection_Approximation quality comparison on datasets}, \ref{subsection_Comparison on a pathological matrix: the Kahan matrix} and \ref{subsection_SRQR based CUR and CX matrix decomposition algorithms} were performed on a laptop with 2.7 GHz Intel Core i5 CPU and 8GB of RAM. Experiments in section \ref{subsection_Run time comparison in distributed memory machines} were performed on multiple nodes of the NERSC machine Edison, where each node has two 12-core Intel processors. Codes in sections \ref{subsection_Approximation quality comparison on datasets}, \ref{subsection_Comparison on a pathological matrix: the Kahan matrix} were in Fortran and based on LAPACK. Codes in section \ref{subsection_Run time comparison in distributed memory machines} were in Fortran and C and based on ScaLAPACK. Codes in section \ref{subsection_SRQR based CUR and CX matrix decomposition algorithms} were in Matlab. 

Codes are available at \textrm{https://math.berkeley.edu/$\sim$jwxiao/}. 

\subsection{Approximation quality comparison on datasets}\label{subsection_Approximation quality comparison on datasets}
In this section we compare the approximation quality of the low-rank approximations computed by SRQR, QRCP and QR on practical datasets. We only list the results on two of them: Human Activities and Postural Transitions (HAPT) \cite{reyes2016transition} and the MNIST database (MNIST) \cite{lecun1998gradient}, while similar performance can be observed on others. We choose block size $b = 64$, oversampling size $p=10$, tolerance $g=5.0$ and set $l = k$ in our SRQR implementation. We compare the residual errors in figures \ref{Figure:Approximation quality comparison on HAPT} and \ref{Figure:Approximation quality comparison on MNIST}, where QR is not doing a great job, while QRCP and SRQR are doing equally well. We compare the run time in figures \ref{Figure:Run time comparison on HAPT} and \ref{Figure:Run time comparison on MNIST}, where SRQR is much faster than QRCP and close to QR. 

In other words, SRQR computes low-rank approximations comparable to those computed by QRCP in quality, yet at performance near that of QR.

\begin{figure}
\begin{minipage}[!t]{0.5\linewidth}
\centering
\includegraphics[width=1.0\linewidth]{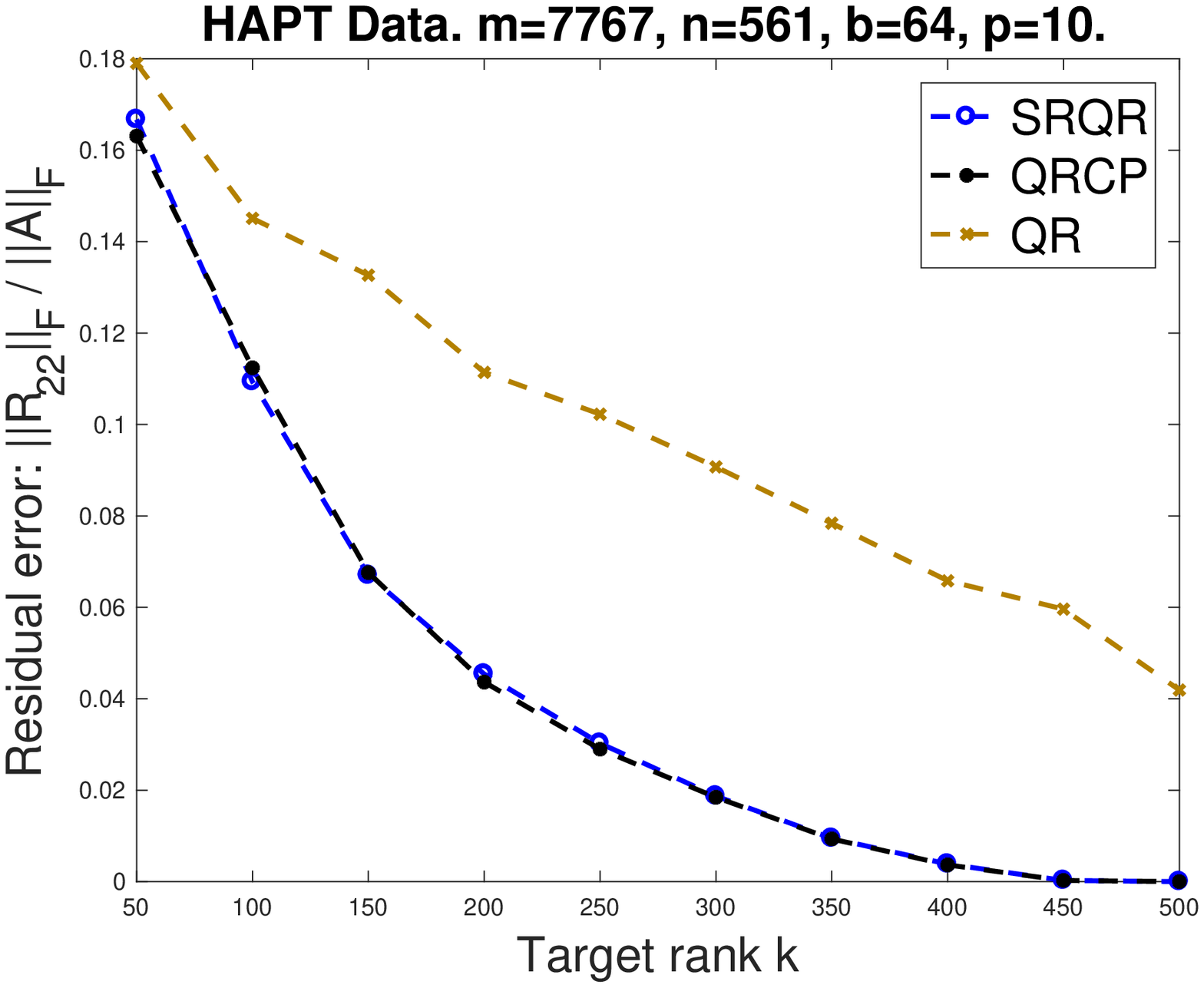}

\caption{Quality on HAPT}\label{Figure:Approximation quality comparison on HAPT}
\end{minipage}%
\begin{minipage}[!t]{0.5\linewidth}
\centering
\includegraphics[width=1.0\linewidth]{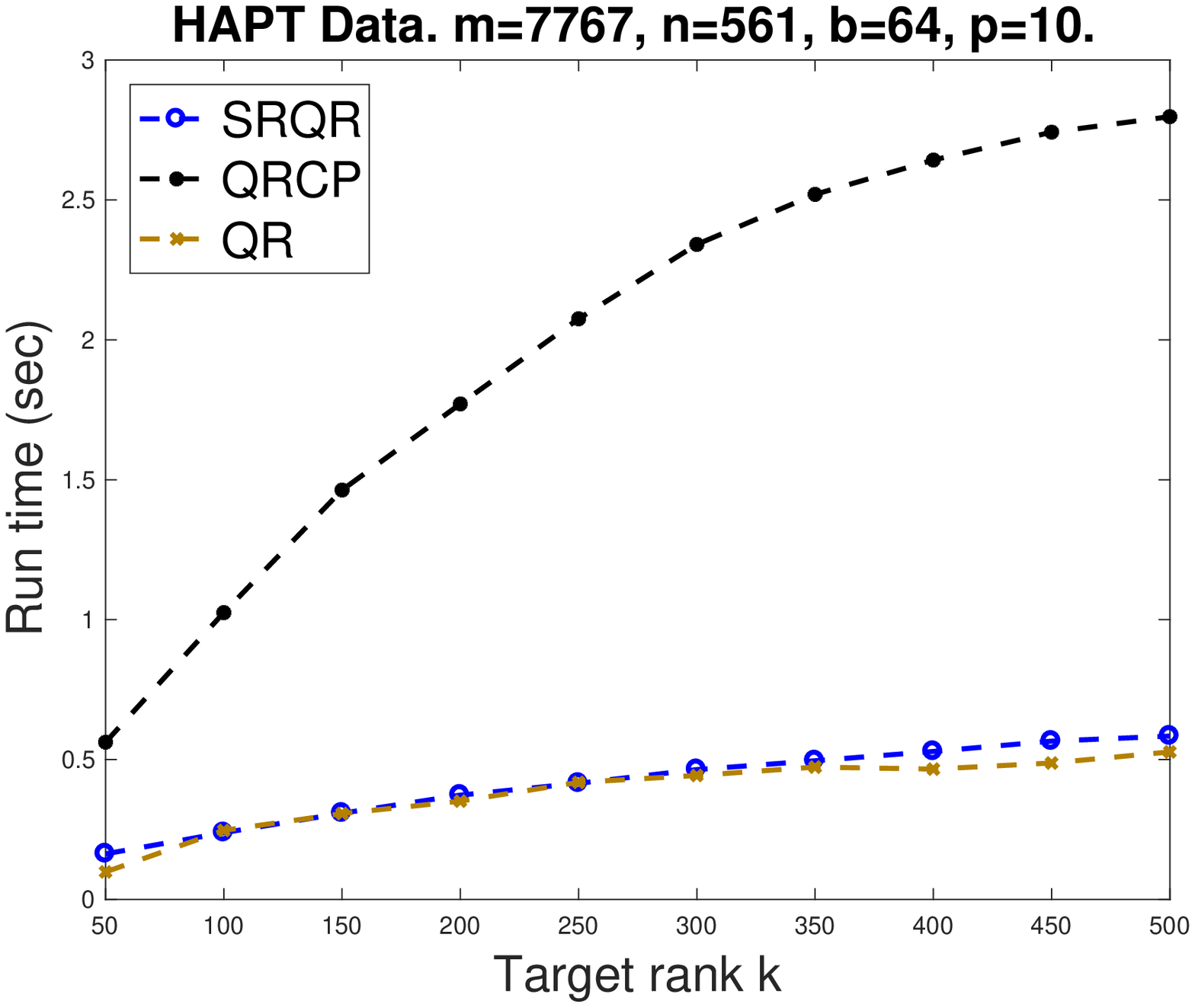}

\caption{Run time on HAPT}\label{Figure:Run time comparison on HAPT}
\end{minipage}
\begin{minipage}[!t]{0.5\linewidth}
\centering
\includegraphics[width=1.0\linewidth]{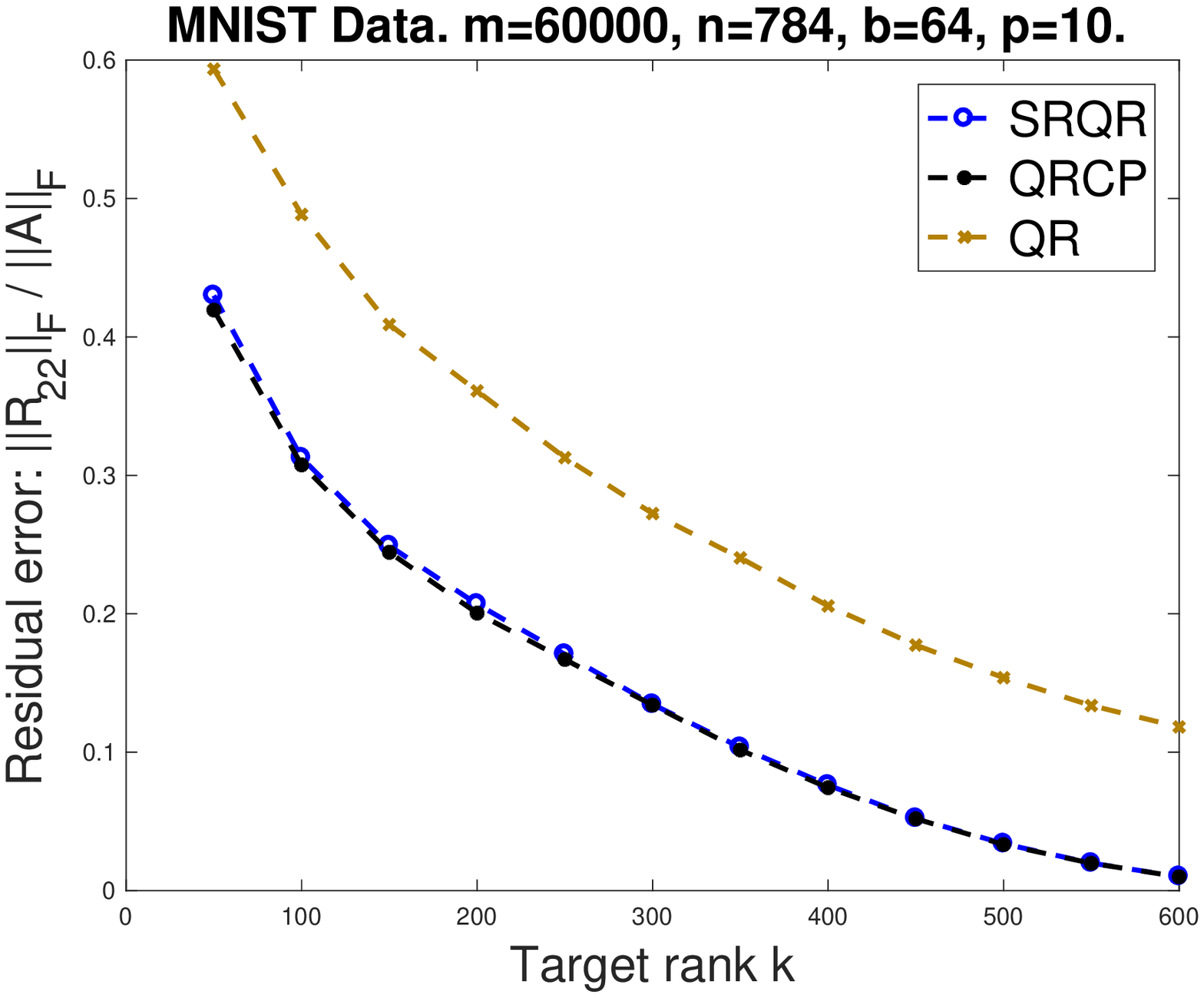}

\caption{Quality on MNIST}\label{Figure:Approximation quality comparison on MNIST}
\end{minipage}%
\begin{minipage}[!t]{0.5\linewidth}
\centering
\includegraphics[width=1.0\linewidth]{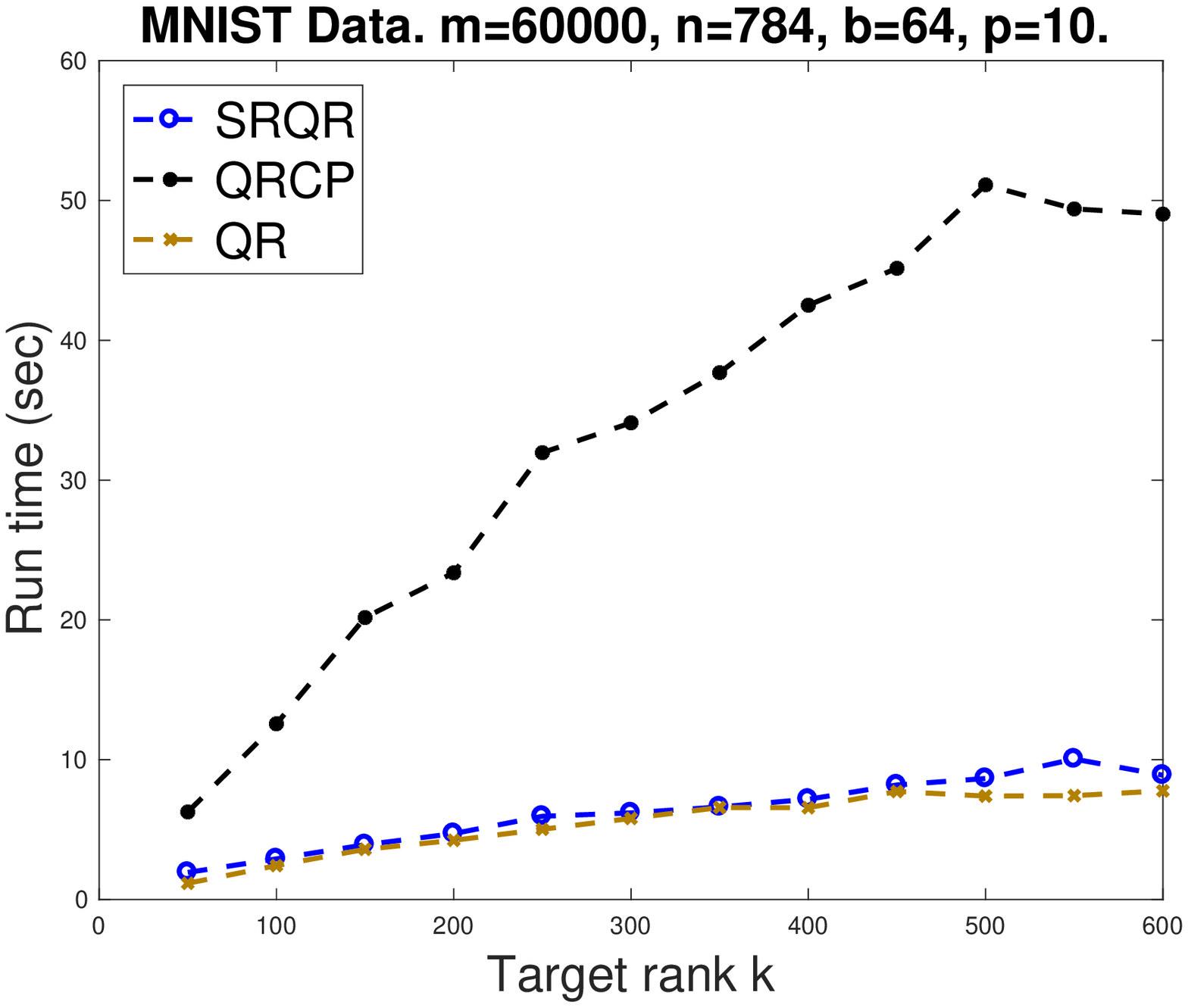}

\caption{Run time on MNIST}\label{Figure:Run time comparison on MNIST}
\end{minipage}
\end{figure}

\subsection{Comparison on a pathological matrix: the Kahan matrix}\label{subsection_Comparison on a pathological matrix: the Kahan matrix}
\begin{table}[!t]
\centering
\begin{tabular}{|c|c|c|c|c|}
\hline
n & k & SRQR $\left(\frac{\|R_{22}\|_F}{\|A\|_F}\right)$& QRCP $\left(\frac{\|R_{22}\|_F}{\|A\|_F}\right)$\\\hline
96 & 95 & 2.449E-13 & 1.808E-03\\\hline
192 & 191 & 1.031E-25 & 2.169E-05\\\hline
384 & 383 & 2.585E-50 & 4.414E-09\\\hline
\end{tabular}
\caption{Residual $\frac{\|R_{22}\|_F}{\|A\|_F}$ comparison on the Kahan matrix}
\label{table_Residual comparison on the Kahan matrix}

\centering
\begin{tabular}{|c|c|c|c|c|}
\hline
index & SRQR $\left(\frac{\sigma_{j}(R_{11})}{\sigma_j(A)}\right)$ & QRCP $\left(\frac{\sigma_{j}(R_{11})}{\sigma_j(A)}\right)$\\\hline
187 & 1.000 & 0.9942\\\hline
188 & 1.000 & 0.9932\\\hline
189 & 1.000 & 0.9916\\\hline
190 & 1.000 & 0.9883\\\hline
191 & 1.000 & 0.2806E-17\\\hline
\end{tabular}
\caption{Singular value approximation ratio $\frac{\sigma_{j}(R_{11})}{\sigma_j(A)}$}
\label{table_Singular value approximation ratio}
\end{table}
In this section we compare SRQR and QRCP on the Kahan matrix \cite{kahan1966numerical}. For the Kahan matrix, QRCP won't do any columns interchanges so it's equivalent to QR. We choose $c = 0.285, s = \sqrt{0.9999-c^2}, n = 96, 192, 384$ and $k = n - 1$. We choose block size $b = 64$, oversampling size $p=10$, tolerance $g=5.0$ and set $l = k$ in our SRQR implementation. From the relative residual errors summarized in table \ref{table_Residual comparison on the Kahan matrix}, we can see that SRQR is able to compute a much better low-rank approximation. 

The singular value ratios $\frac{\sigma_{j}(R_{11})}{\sigma_j(A)}$ never exceeds $1$ for any approximation, but we would like them to be close to $1$ for a reliable spectrum-revealing QR factorization. For the Kahan matrix where $n = 192$ and $k = 191$, table \ref{table_Singular value approximation ratio} demonstrates that QRCP failed to do so for the index 191 singular value, whereas SRQR succeeded for all singular values. 

The additional run time required to compute $g_2$ is negligible. In our extensive computations with practical data in machine learning and other applications, $g_2$ always remains modest and never triggers subsequent SRQR column swaps. Nonetheless, computing $g_2$ serves as an insurance policy against potential SRQR mistakes by QRCP or RQRCP. 

\subsection{Run time comparison in distributed memory machines}\label{subsection_Run time comparison in distributed memory machines}
In this section we compare run time and strong scaling of RQRCP against ScaLAPACK QRCP routines (PDGEQPF and PDGEQP3), ScaLAPACK QR routine PDGEQRF on distributed memory machines. PDGEQP3 \cite{quintana1998blas} is not yet incorporated into ScaLAPACK, but it's usually more efficient than PDGEQPF, so is also included in the comparison. 

The way the data is distributed over the memory hierarchy of a computer is of fundamental importance to load balancing and software reuse. ScaLAPACK uses a block cyclic data distribution in order to reduce overhead due to load imbalance and data movement. Block-partitioned algorithms are used to maximize local processor performance and ensure high levels of data reuse. 

Now we discuss how we parallelize RQRCP on a distributed memory machine based on ScaLAPACK. After we distribute $A$ to all processors, we use PDGEMM to compute $B = \Omega A$. In each loop, we use our version of PDGEQPF to compute a partial QRCP factorization of $B$, meanwhile we swap the columns of $A$ according to the pivots found on $B$. In our implementation, $A$ and $B$ share the same column blocking factor NB, therefore we don't introduce much extra communication costs since the same column processors are sending and receiving messages while doing the swaps on both $A$ and $B$. After we swap the pivoted columns to the leading position of the trailing matrix of $A$, we use PDGEQRF to perform a panel QR. Next, we use PDLARFT and PDLARFB to apply the transpose of an orthogonal matrix in a block form to the trailing matrix of $A$. At the end of each loop, we update the remaining columns of $B$ using updating formula \eqref{Eqn:updating_1}. See algorithm \ref{algorithm_Parallel RQRCP}.

\begin{algorithm}
\caption{Parallel RQRCP}\label{algorithm_Parallel RQRCP}
\begin{algorithmic}
\STATE $\textbf{Inputs:}$
\STATE $A$ is $m \times n$ matrix, $k$ is target rank, $1 \le k \le \min\left(m,n\right)$
\STATE $\textbf{Outputs:}$
\STATE $Q$ is $m \times m$ orthogonal matrix
\STATE $R$ is $m \times n$ upper trapezoidal matrix
\STATE $\Pi$ is $n \times n$ permutation matrix such that $A\Pi = QR$
\STATE $\textbf{Algorithm:}$
\STATE Determine block size $b$ and oversampling size $p \ge 0$
\STATE Distribute $A$ to processors using block-cyclic layout
\STATE Generate i.i.d Gaussian matrix $\Omega \in \mathcal{N}(0,1)^{(b+p) \times m}$
\STATE Compute $B=\Omega A$ using PDGEMM, initialize $\Pi=I_{n}$
\FOR{$i=1:b:k$}
\STATE $b=\min\left(b,k-i+1\right)$
\STATE Run partial version of PDGEQPF (QRCP) on $\!B(:,i:n)$, meanwhile apply the swaps to $A(:,i:n)$ and $\Pi$
\STATE PDGEQRF (QR) on $A\left(i:m,i:i+b-1\right) = \widetilde{Q} \widetilde{R}$
\STATE Use PDLARFT and PDLARFB to apply $\widetilde{Q}^T$ in a blocked form to $A(i:m,i+b:n)$
\STATE Update $B(1:b,i+b:n)=B(1:b,i+b:n)-B(1:b,i:j+b-1)(A(i:i+b-1.i:i+b-1))^{-1}A(i:i+b-1,i+b:n)$
\ENDFOR
\STATE Q is the product of $\widetilde{Q}$, R = upper trapezoidal part of $A$
\end{algorithmic}
\end{algorithm}

We choose block size $b=64$ and oversampling size $p=10$ in our RQRCP implementation. For all routines, we use an efficient data distribution by setting distribution block size MB = NB = 64 and using a square processor grid, i.e., $P_r = P_c$, as recommended in \cite{blackford1997scalapack}. Since the run time is only dependent on the matrix size but not the actual magnitude of the entries, we do the comparison on random matrices with different sizes, with $n = 20000, 50000$ and $200000$. See figures \ref{Figure:run_time_20000} through \ref{Figure:strong_scaling_200000}. 

The run time of RQRCP is always much better than that of ScaLAPACK QRCP routines and relatively close to that of ScaLAPACK QR routine. For very large scale low-rank approximations with limited number of processors, distributed RQRCP is likely the method of choice. 

However, RQRCP remains less than ideally strong scaled. There are two possible ways to improve our parallel RQRCP algorithm and implementation in our future work. 
\begin{itemize}
\item One bottleneck of our RQRCP parallel implementation is communication cost incurred by partial QRCP on the compressed matrix $B$. These communication costs are negligible on share memory machines or in distributed memory machines with a relatively small number of nodes. On distributed memory machines with a large number of nodes, many of them will be idle during partial QRCP computations on $B$, causing the gap between RQRCP and PDGEQRF (QR) run time lines in figures \ref{Figure:run_time_50000} and \ref{Figure:run_time_200000}. The communication costs on $B$ can be possibly reduced by using QR with tournament pivoting \cite{demmel2015communication} in place of partial QRCP. 
\item Another possible improvement of our RQRCP parallel implementation is to replace PDGEQRF (QR) with Tall Skinny QR (TSQR) \cite{demmel2012communication} in the panel QR factorization. 
\end{itemize}

The parallelization of SRQR in ScaLAPACK is also in our future work. 

\begin{figure}
\begin{minipage}[!t]{0.5\linewidth}
\centering
\includegraphics[width=1.0\linewidth]{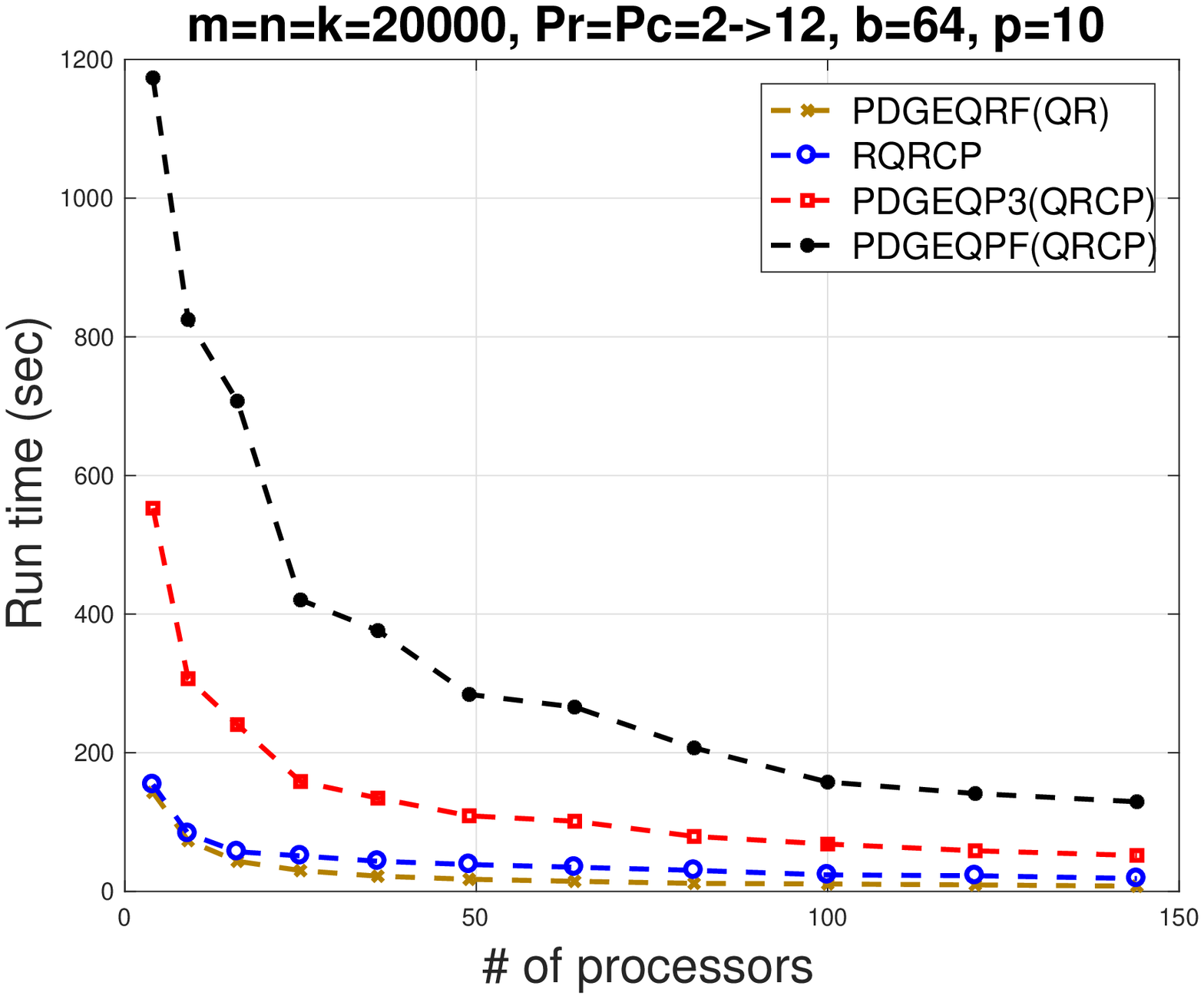}
\caption{Run time, n=20000}\label{Figure:run_time_20000}
\end{minipage}%
\begin{minipage}[!t]{0.5\linewidth}
\centering
\includegraphics[width=1.0\linewidth]{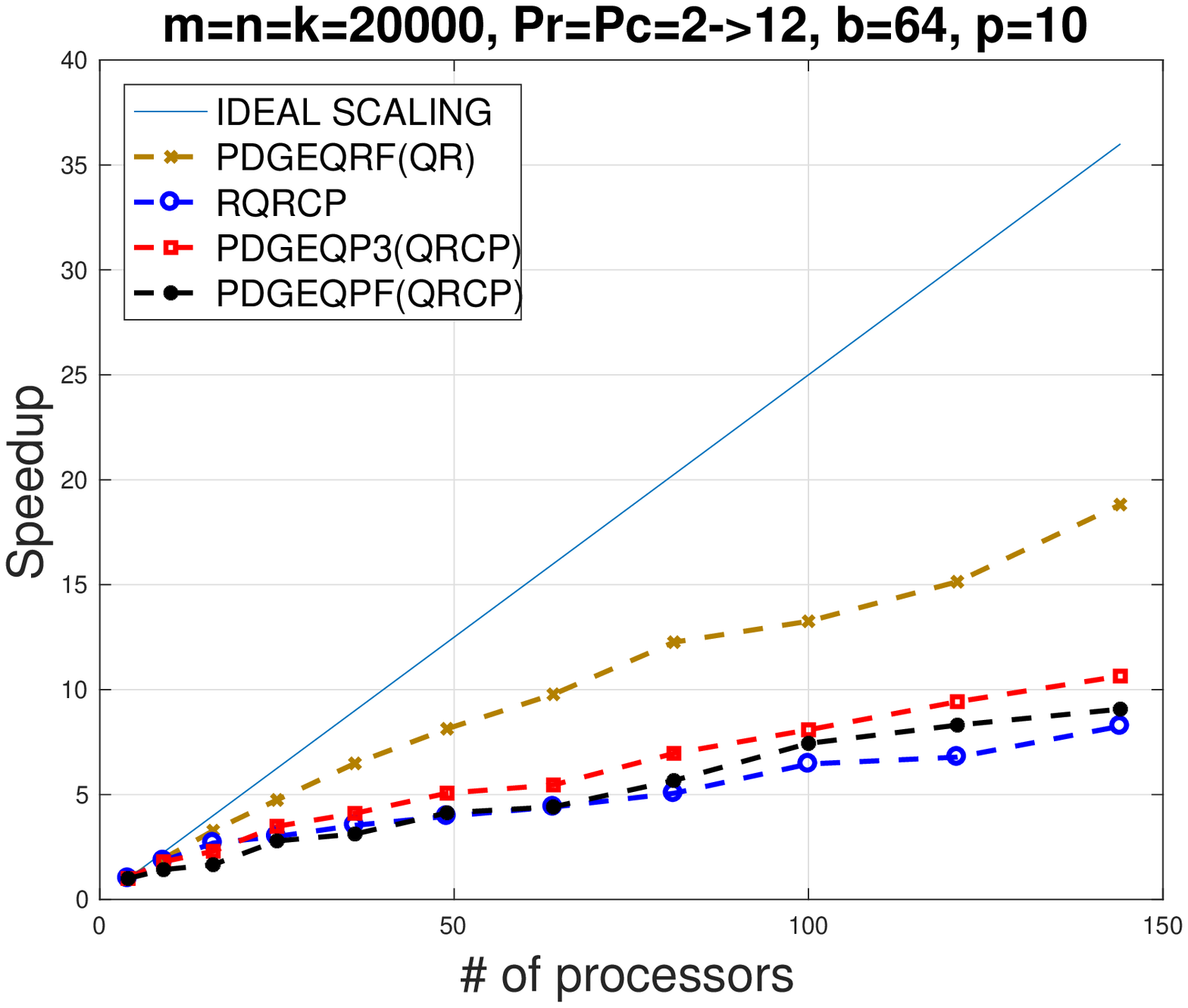}
\caption{Strong scaling, n=20000}\label{Figure:strong_scaling_20000}
\end{minipage}
\begin{minipage}[!t]{0.5\linewidth}
\centering
\includegraphics[width=1.0\linewidth]{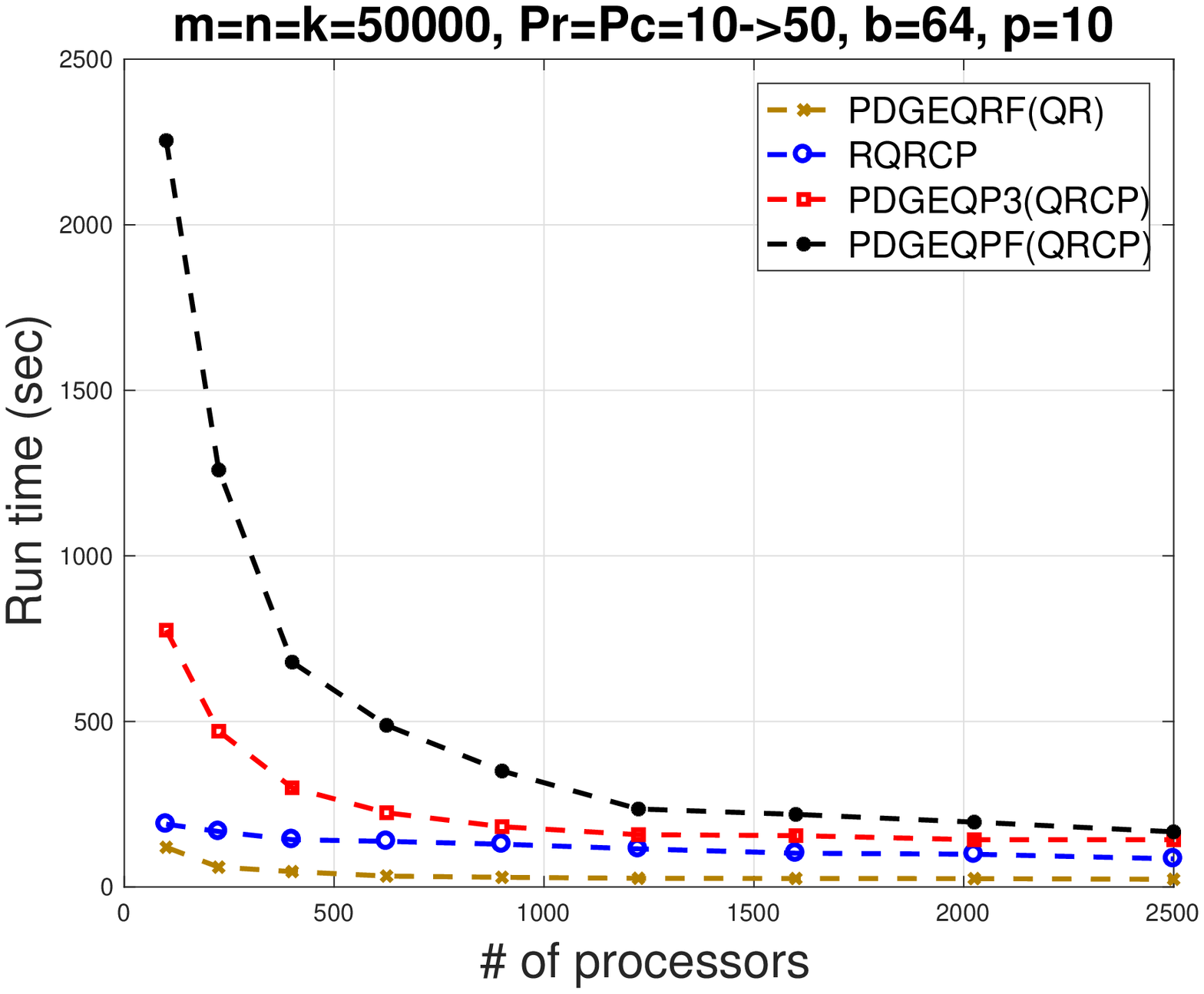}
\caption{Run time, n=50000}\label{Figure:run_time_50000}
\end{minipage}%
\begin{minipage}[!t]{0.5\linewidth}
\centering
\includegraphics[width=1.0\linewidth]{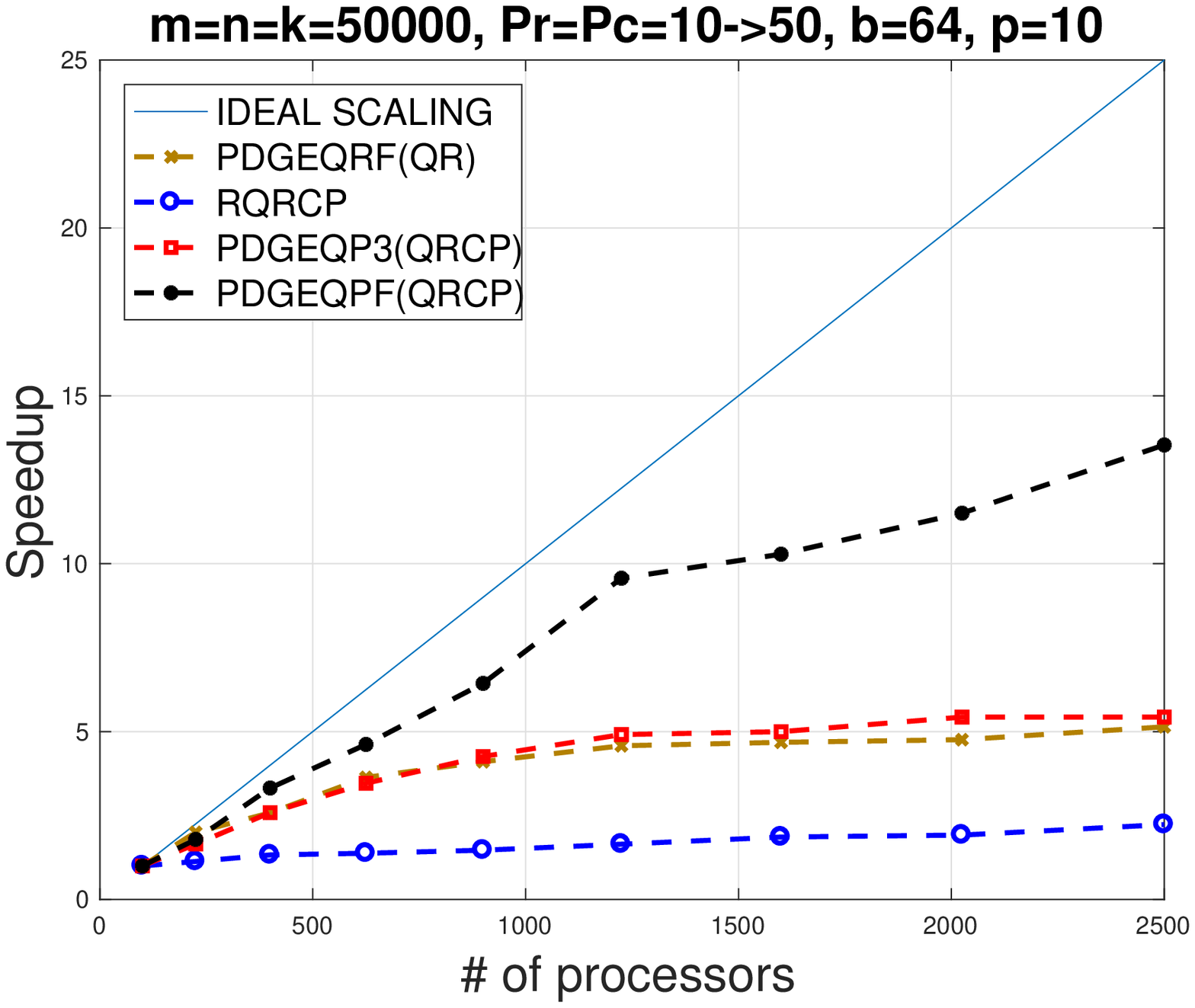}
\caption{Strong scaling, n=50000}\label{Figure:strong_scaling_50000}
\end{minipage}
\begin{minipage}[!t]{0.5\linewidth}
\centering
\includegraphics[width=1.0\linewidth]{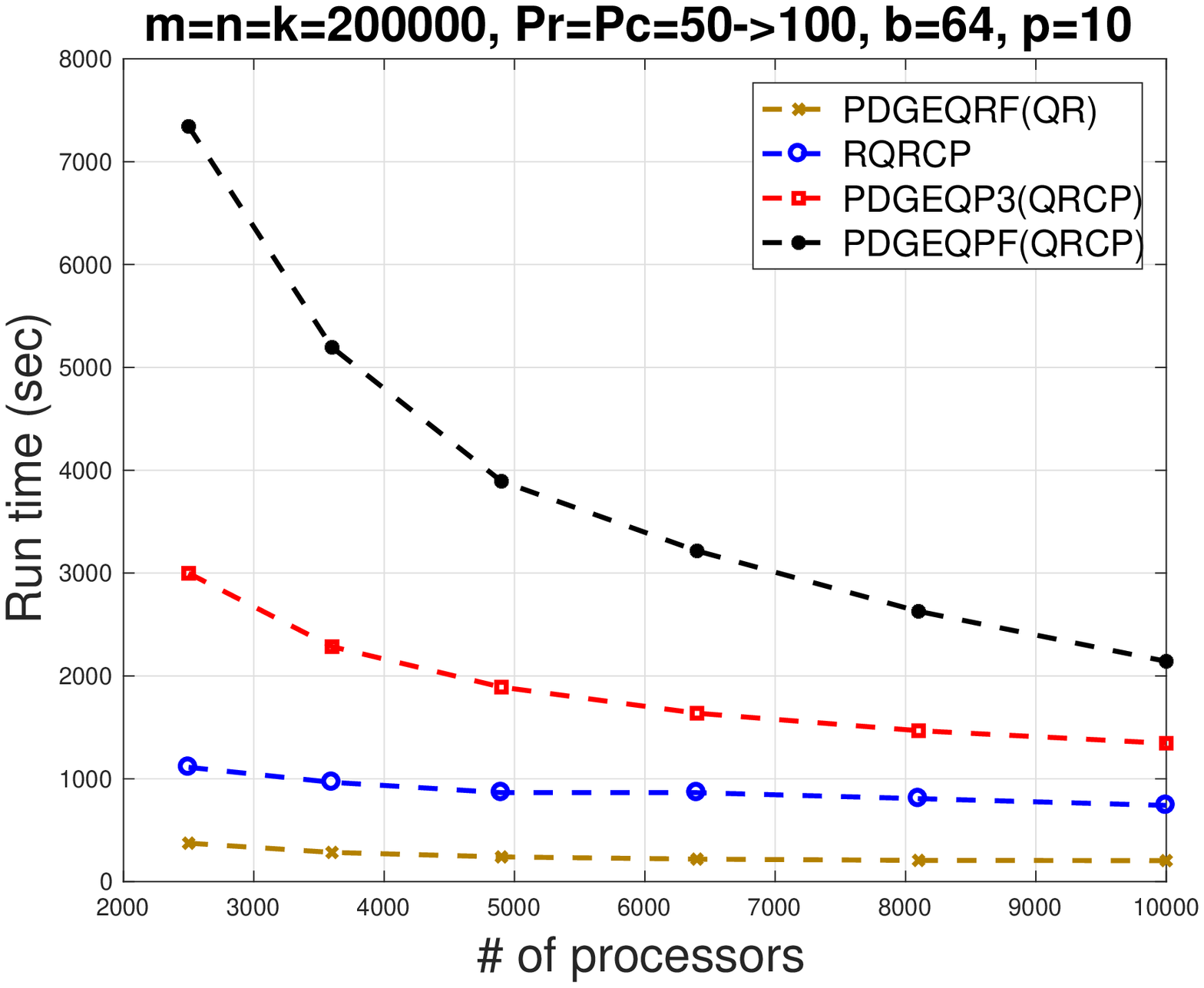}
\caption{Run time, n=200000}\label{Figure:run_time_200000}
\end{minipage}%
\begin{minipage}[!t]{0.5\linewidth}
\centering
\includegraphics[width=1.0\linewidth]{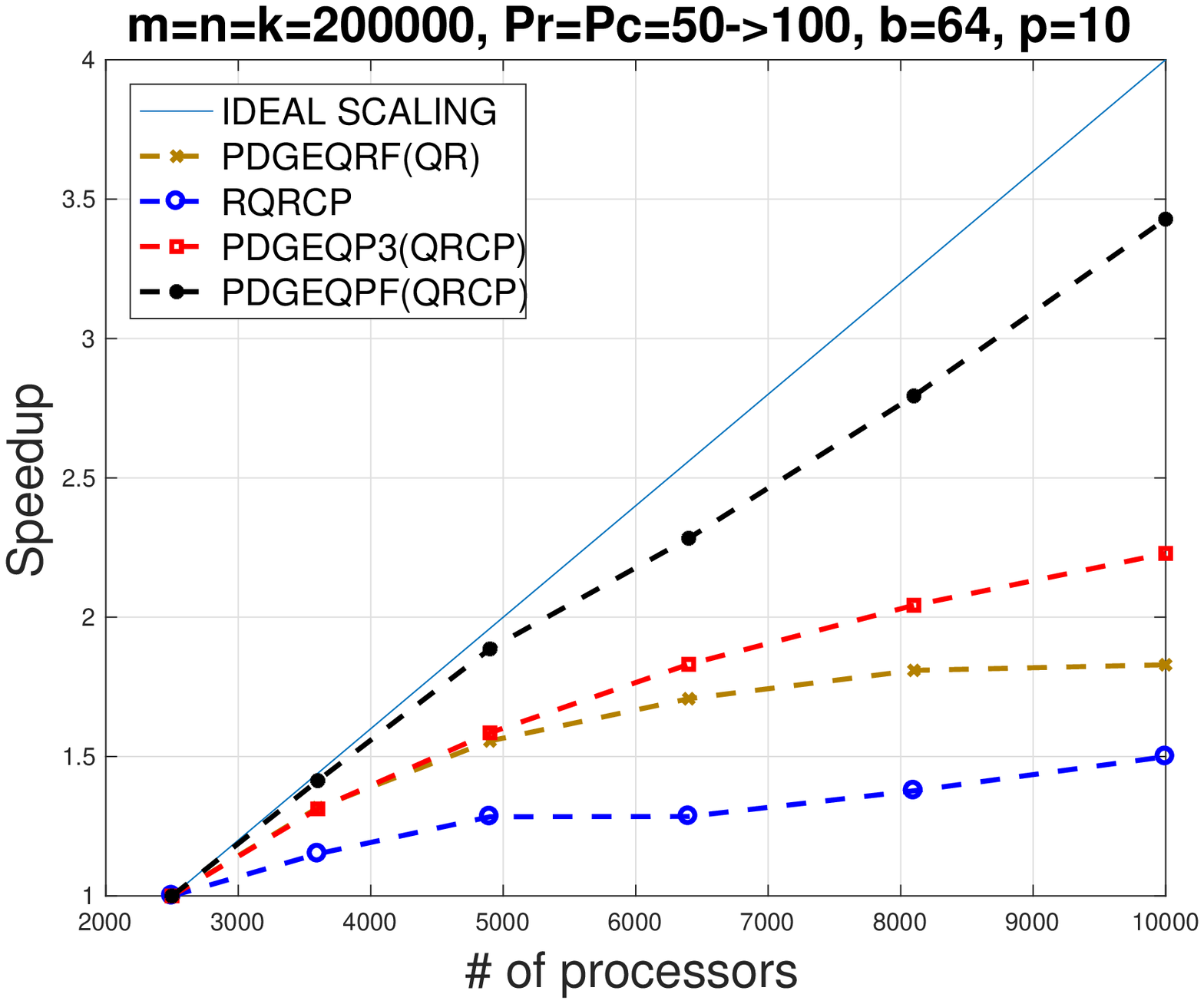}
\caption{Strong scaling, n=200000}\label{Figure:strong_scaling_200000}
\end{minipage}
\end{figure}

\subsection{SRQR based CUR and CX matrix decomposition}\label{subsection_SRQR based CUR and CX matrix decomposition algorithms}
The CUR and CX matrix decompositions are two important low-rank matrix approximation and data analysis techniques, and have been widely discussed in \cite{wang2013improving,gittens2013revisiting,boutsidis2014near}. A CUR matrix decomposition algorithm seeks to find $c$ columns of $A$ to form $C \in \mathbb{R}^{m \times c}$, $r$ rows of $A$ to form $R \in \mathbb{R}^{r \times n}$, and an intersection matrix $U \in \mathbb{R}^{c \times r}$ such that $\left\|A-CUR\right\|_F$ is small. One particular choice of $U$ is $C^{\dagger}AR^{\dagger}$, which is the solution to $\min_{X}\left\|CXR-A\right\|_F^2$. A CX decomposition algorithm seeks to find $c$ columns of $A$ to form $C \in \mathbb{R}^{m \times c}$ and a matrix $X \in \mathbb{R}^{c \times n}$ such that $\left\|A-CX\right\|_F$ is small. One particular choice of $X$ is $C^{\dagger}A$, which is the solution to $\min_{X}\left\|CX-A\right\|_F^2$. 

GitHub repository \cite{SimonDu2014} provides a Matlab library for CUR matrix decomposition. These CUR matrix decomposition algorithms can be modified to compute a CX matrix decomposition. Since the crucial component of CUR and CX matrix decompositions is column/row selection, we can use SRQR to find the pivots and hence compute these decompositions. In this experiment, we compare SRQR against the state-of-the-art CUR and CX matrix decomposition algorithms. 

We compare the approximation quality and run time on a kernel matrix $A$ of size $4177 \times 4177$ computed on Abalone Data Set \cite{bache2013uci}, for target rank $k = 200 = l$ with different numbers of columns and rows used. In Figures \ref{Figure:approximation quality cur}, \ref{Figure:run time cur}, \ref{Figure:approximation quality cx} and \ref{Figure:run time cx}, the x-axis stands for the number of columns and rows we choose for the CUR or CX matrix decomposition. The most efficient and effective method in the Matlab library is the near optimal method \cite{wang2013improving}. The near-optimal algorithm consists of three steps: the approximate SVD via random projection \cite{boutsidis2014near,halko2011finding}, the dual set sparsification algorithm \cite{boutsidis2014near}, and the adaptive sampling algorithm \cite{deshpande2006matrix}. We can see that SRQR and the near optimal method are obtaining much better low-rank approximations than all the other methods, while SRQR is much faster than the near optimal method. 

\begin{figure}
\begin{minipage}[!t]{0.5\linewidth}
\centering
\includegraphics[width=1.0\linewidth]{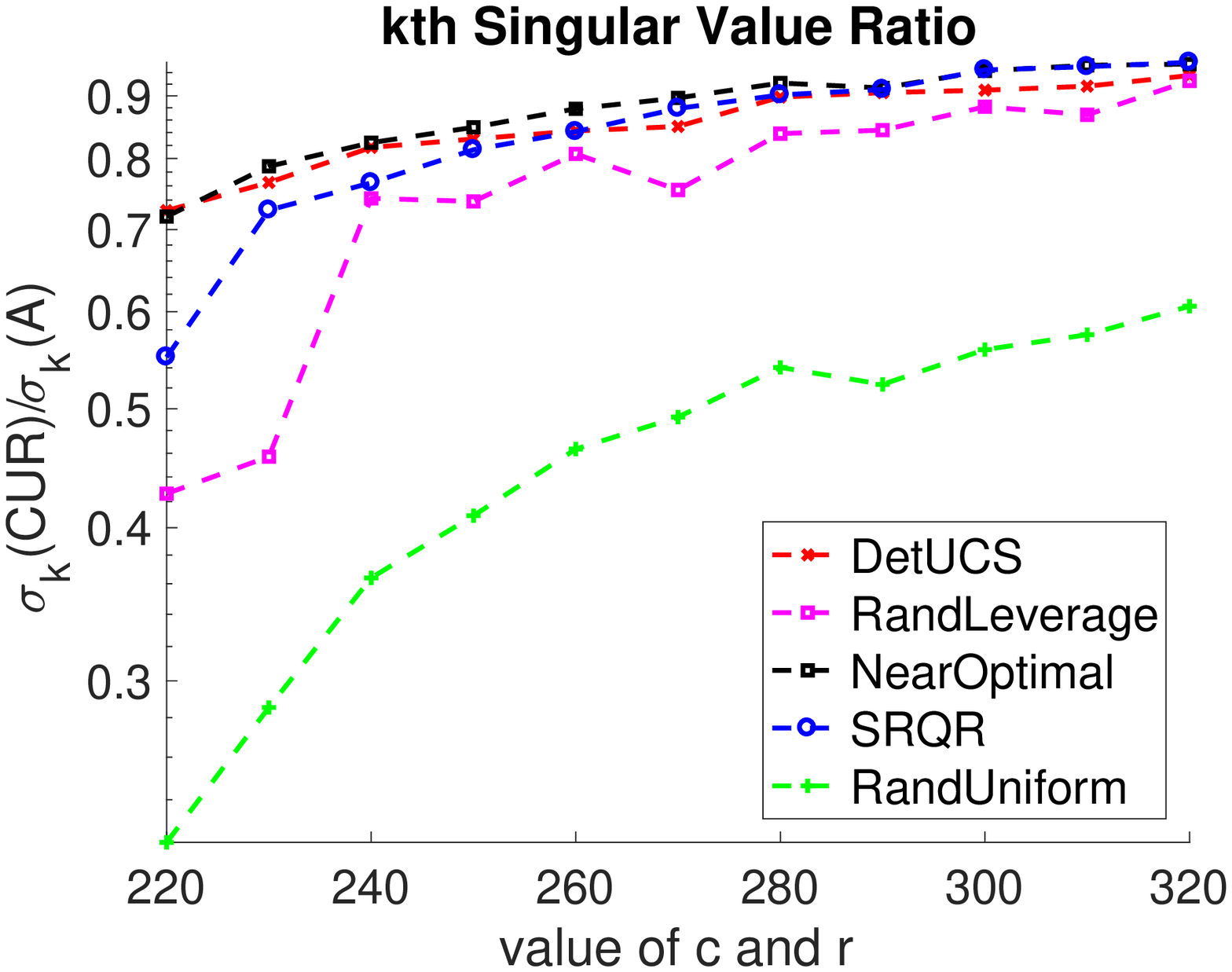}
\caption{Quality, CUR}\label{Figure:approximation quality cur}
\end{minipage}%
\begin{minipage}[!t]{0.5\linewidth}
\centering
\includegraphics[width=1.0\linewidth]{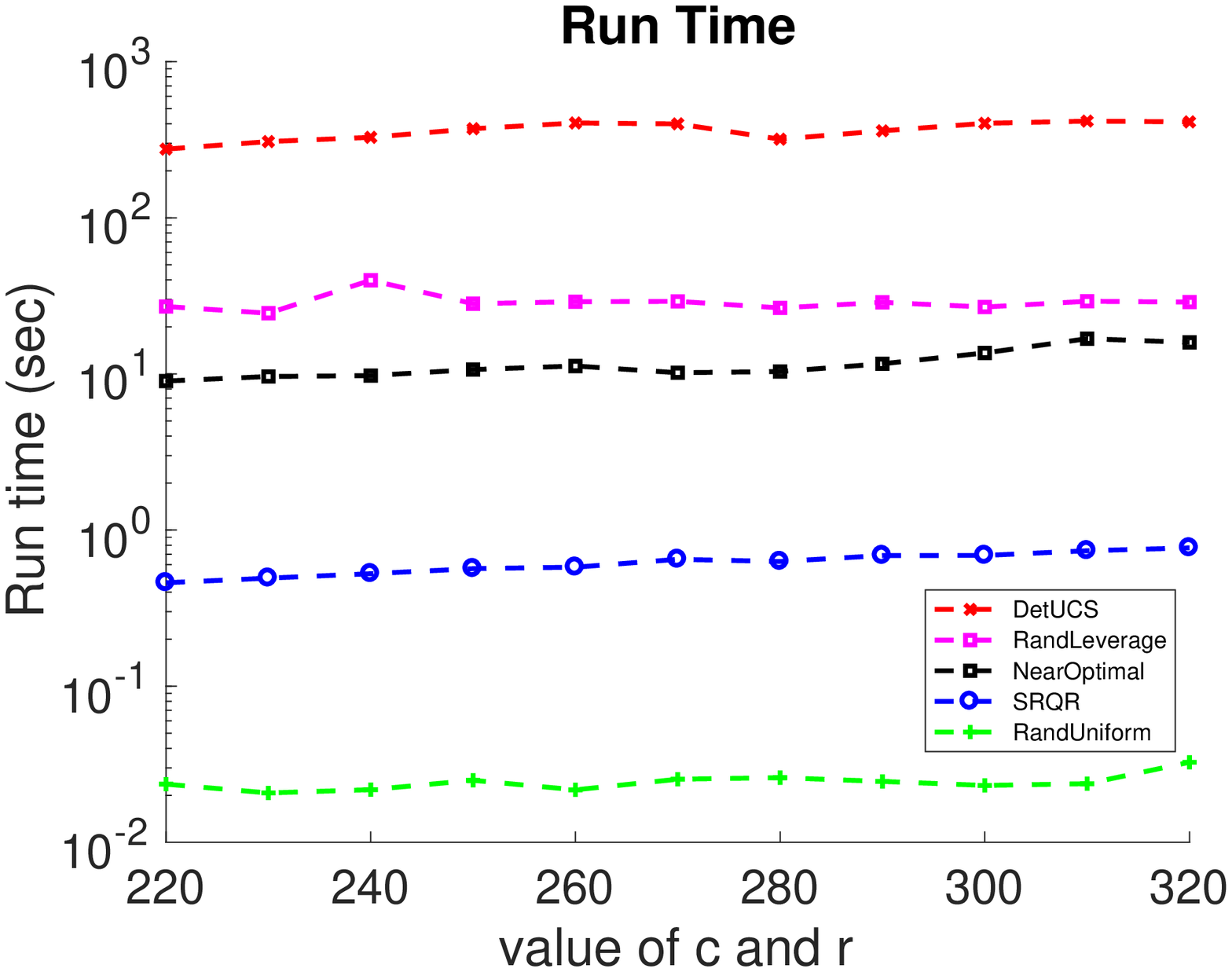}
\caption{Run time, CUR}\label{Figure:run time cur}
\end{minipage}
\begin{minipage}[!t]{0.5\linewidth}
\centering
\includegraphics[width=1.0\linewidth]{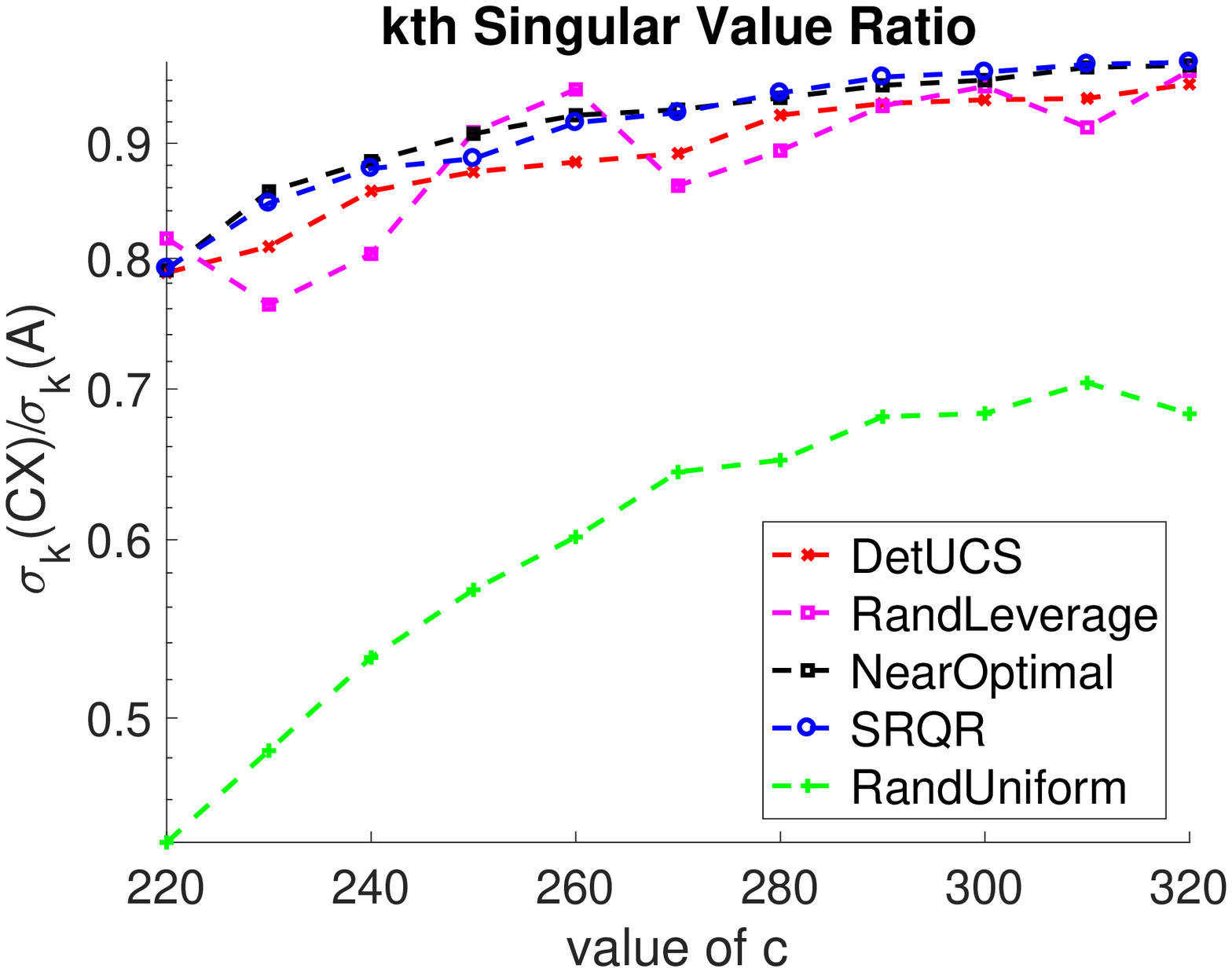}
\caption{Quality, CX}\label{Figure:approximation quality cx}
\end{minipage}%
\begin{minipage}[!t]{0.5\linewidth}
\centering
\includegraphics[width=1.0\linewidth]{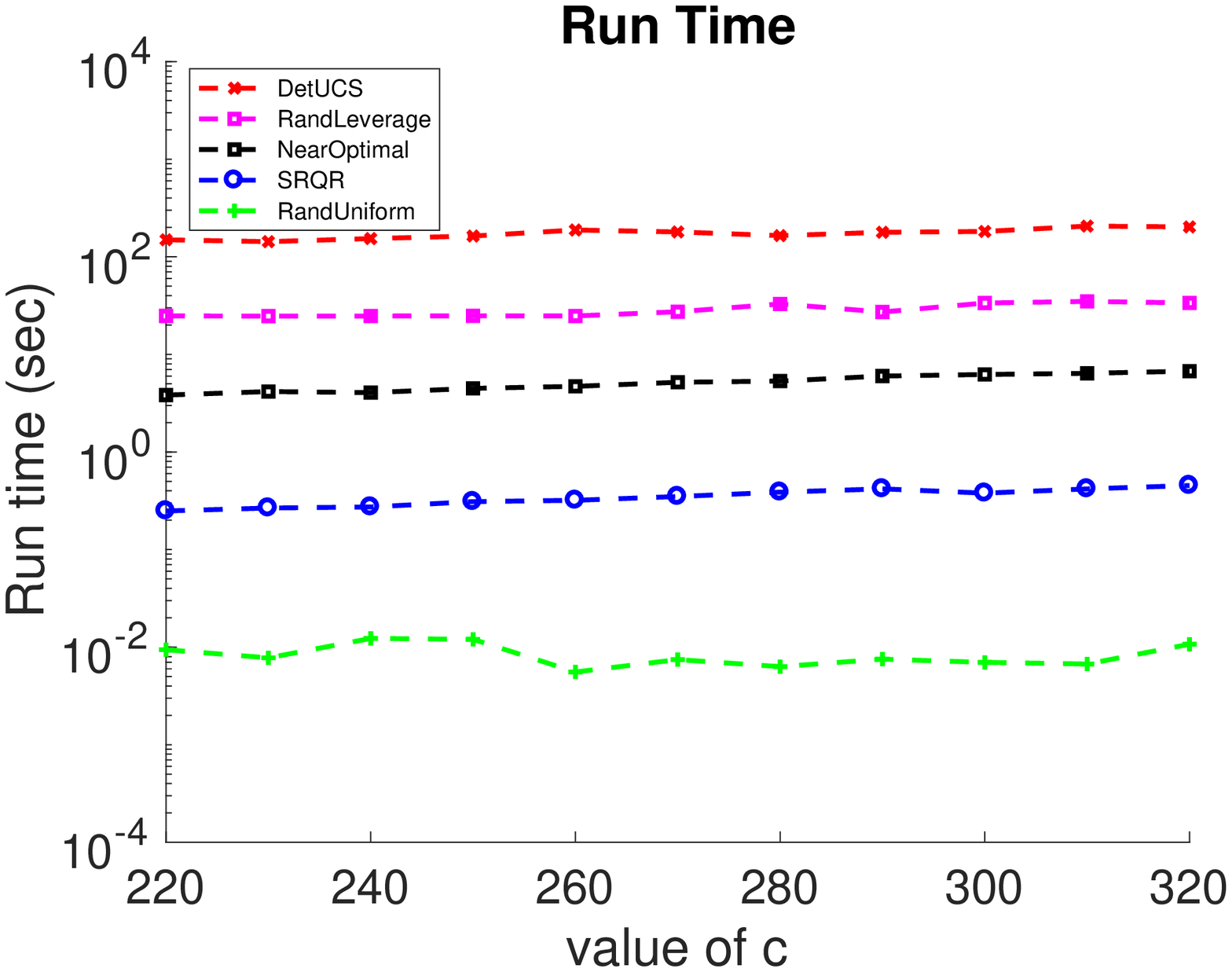}
\caption{Run time, CX}\label{Figure:run time cx}
\end{minipage}
\end{figure}

\section{Conclusion}
In this work, we showed that RQRCP is as reliable as QRCP with failure probabilities exponentially decaying in oversampling size. We developed spectrum-revealing QR factorizations (SRQR) for low-rank matrix approximations, and analyzed RQRCP as a reliable tool for such approximations. Most importantly, we report results from our distributed memory RQRCP implementations that are significantly better than QRCP implementations in ScaLAPACK, potentially making RQRCP a method of choice for large scale low-rank matrix approximations on distributed memory systems. We also developed SRQR based CUR and CX matrix decomposition algorithms, which are comparable to other state-of-the-art CUR and CX matrix decomposition algorithms in quality, while much more efficient in run time. 

\section*{Acknowledgment}
The authors would like to thank Gregorio Quintana-Orti for sharing the PDGEQP3 routine. The authors also would like to thank the reviewers for their helpful remarks.

\bibliographystyle{IEEEtran}
\bibliography{main.bib} 
\end{document}